\newtheorem{theorem}{Theorem}[section]
\newtheorem{proposition}[theorem]{Proposition}
\newtheorem{definition}[theorem]{Definition}
\newtheorem{lemma}[theorem]{Lemma}
\theoremstyle{remark}
\DeclareMathOperator{\divop}{div}
\DeclareMathOperator{\curl}{curl}
\begin{document}

\title{Non-existence of splash singularities for the two-fluid Euler--Navier-Stokes system}
\author{Aynur Bulut}
\date{\today}

\begin{abstract}
We consider a system of two incompressible fluids separated by a free interface.  The first fluid is inviscid, governed by the
Euler system, while the second fluid has positive viscosity and is governed by the Navier-Stokes system.  We formulate a notion
of splash-type self-intersection singularities, and show that this system cannot form such a singularity in finite time.  The main
obstacle in our analysis is to handle the effect of bulk vorticity induced by the Navier-Stokes flow.  To the best of our knowledge
this is the first result on preclusion of splash-type singularities in the presence of non-trivial vorticity interior to one
of the fluids.
\end{abstract}

\maketitle

\section{Introduction}

In this work, we consider the free interface evolution associated to a coupled Euler--Navier-Stokes system of two incompressible 
two-dimensional fluids, each occupying a simply connected region in the plane.  

To model the two fluid system, we let $I\subset\mathbb{R}$ be a time interval, and 
let $$z\in C^2(I\times \mathbb{R};\mathbb{R}^2),$$ be an injective function parameterizing the 
interface $\Gamma_t=\{z(t,x):x\in\mathbb{R}\}$ between the two fluid regions.  We suppose that for each $t\in I$, we can partition 
the plane as $$\mathbb{R}^2=\Omega_{E}(t)\cup \Gamma_t\cup \Omega_{NS}(t),$$ where $\Omega_E(t)$ is the region occupied 
by the Euler fluid at time $t$, and $\Omega_{NS}(t)$ is the region occupied by the Navier-Stokes fluid.  

We consider the evolution of velocity vector fields $u$, associated to the Euler flow, and $v$, associated 
to the Navier-Stokes flow, defined in the domains $\Omega_{E}(t)$ and $\Omega_{NS}(t)$, respectively, and satisfying
\begin{align}
\left\lbrace\begin{array}{l}\rho_E(\partial_t u+(u\cdot\nabla)u)=-\nabla p-g\rho_E\mathbf{e}_2,\\
\hspace*{2.5in} \textrm{in}\,\, \{(t,x):t\in I, x\in \Omega_E(t)\},\\ \ \\
\rho_{NS}(\partial_t v+(v\cdot\nabla)v)=\nu_{NS}\Delta v-\nabla q-g\rho_{NS}\mathbf{e}_2,\\
\hspace*{2.5in} \textrm{in}\,\, \{(t,x):t\in I, x\in \Omega_{NS}(t)\},\end{array}\right.\label{eqE1}
\end{align}
along with the incompressibility conditions
\begin{align}
\divop_x u(t,x)=0\,\,\textrm{and}\,\,\divop_x v(t,x)=0.\label{eqE2}
\end{align}
where the constant $g>0$ represents the strength of the gravitational potential, $\nu_{NS}>0$ denotes the viscosity coefficient 
for the Navier-Stokes fluid, and $\rho_{E}>0$ and $\rho_{NS}>0$ are constants denoting the density of the Euler and 
Navier-Stokes fluids, respectively.  Moreover, we assume $\curl_x u(t,x)=0$ on $\Omega_{E}(t)$.  

Our study is motivated by a number of recent results on self-intersection singularities for moving boundary problems in fluid 
dynamics (these results arise in the context of an extensive theory for the fully nonlinear system governing the evolution
of water waves; see work of Wu \cite{Wu} as well as Ionescu-Pusateri \cite{IP} and Ifrim-Tataru \cite{IT}, and the references cited
in these works).  Studying the possible formation of splash-type and splat-type
singularities, in the work of Castro, C\'ordoba, Fefferman, Gancedo, and
G\'omez-Serrano \cite{CCFGGS,CCFGGS-2}, it was shown that these types of self-intersections
can arise dynamically for the free boundary problem associated 
to the flow of a single fluid (here, the region outside of the domain of the fluid is considered as a vacuum, with or without 
the effects of surface tension).  See also the work of Coutand and Shkoller \cite{CS-2}.  On the other hand, in 
\cite{FIL-NoSplash} and \cite{CS}, Fefferman-Ionescu-Lie and Coutand-Shkoller, respectively, applied a related analysis to the situation of a
two-fluid coupled system, where the 
vacuum region is replaced by the flow of a fluid (of possibly different density).  In this case, the presence of the second 
fluid {\it prevents} the formation of splash-type intersections.

The results of \cite{CCFGGS,CCFGGS-2} and \cite{CS-2} were initially shown in the setting of the Euler equation.  Subsequent 
analysis extended the construction of singularity to the setting of viscous fluids modeled by the Navier-Stokes equations, 
see, e.g. \cite{CCFGGS-3} and \cite{CS-3}.  The goal of the present paper is to consider the preclusion of splash-type
singularities for a two-fluid system in the spirit of \cite{FIL-NoSplash}, in the case that one of the fluids is driven by the
Navier-Stokes system, and therefore acquires non-trivial vorticity in its interior.  This additional vorticity is the main 
additional effect that must be considered in our analysis.

In the rest of this paper, we specialize to the case of a periodic setting; that is, we assume that $z$, $u$ and $v$ are periodic 
in the $\mathbf{e}_1$ direction with period $2\pi$ .  We also use the convention that
\begin{align}
\widetilde{u}(t,\alpha)=u(t,z(t,\alpha))\quad\textrm{and}\quad \widetilde{v}(t,\alpha)=v(t,z(t,\alpha))\label{def-ut-vt}
\end{align}
denotes the velocities $u$ and $v$ evaluated on the interface.

To specify the boundary conditions, we set 
\begin{align*}
{\mathbf n}(t,\alpha)=-[(\partial_\alpha z)(t,\alpha)]^\perp.
\end{align*}
For $u$, we require 
\begin{align}
\Big(\partial_t z(t,\alpha)-u(t,z(t,\alpha))\Big)\cdot {\mathbf n}(t,\alpha)=0,\label{eqE3}
\end{align}
while for $v$ we impose the boundary condition
\begin{align}
\partial_t z(t,\alpha)-v(t,z(t,\alpha))=0.\label{eqE4}
\end{align}

The equations and boundary conditions ($\ref{eqE1}$)--($\ref{eqE4}$) are supplemented by a compatibility condition on the interface.
In particular, fixing a coefficient of surface tension $\sigma\geq 0$, we also require that the evolution of the interface satisfies
\begin{align}
\nonumber &p(t,z(t,\alpha)){\mathbf n}(t,\alpha)+\Big(q(t,z(t,\alpha))1_{2\times 2}-2\nu_{NS}D(v)|_{(t,z(t,\alpha))}\Big){\mathbf n}(t,\alpha)\\
&\hspace{3.2in}=-\sigma K_{z(t,\cdot)}(\alpha){\mathbf n}(t,\alpha),\label{eqE5}
\end{align}
where the expressions
\begin{align*}
D(f)=\nabla f+(\nabla f)^\top,\quad K_{z(t,\cdot)}(\alpha):=\frac{(\partial_{\alpha}z_1)(\partial_\alpha^2z_2)-(\partial_\alpha z_2)(\partial_\alpha^2 z_1)}{[(\partial_\alpha z_1)^2+(\partial_\alpha z_2)^2]^{3/2}}\bigg|_{(t,\alpha)}
\end{align*}
denote the symmetrized gradient and the curvature of the interface described by $\alpha\mapsto z(t,\alpha)$ at the point $z(t,\alpha)$.  In all expressions above, if $x=(x_1,x_2)\in\mathbb{R}^2$, then $x^\perp=(-x_2,x_1)$.

To give the formal statement of our main result, we introduce some auxiliary quantities which will be used to identify the specific geometric splash situation which we seek to exclude.  The fundamental definition in this context is a chord-arc functional, defined as follows: given $z:\mathbb{R}\times I\rightarrow\mathbb{R}$, define 
\begin{align*}
F_z(t,\alpha,\beta)=\left\lbrace\begin{array}{ll}\frac{|z(t,\alpha)-z(t,\beta)|}{|\alpha-\beta|},&\quad\textrm{if}\,\,\alpha\neq \beta,\\
|(\partial_\alpha z)(t,\alpha)|,&\quad\textrm{if}\,\, \alpha=\beta.\end{array}\right.
\end{align*}

Now, for each $z:\mathbb{R}\times I\rightarrow\mathbb{R}$, set
\begin{align*}
CA(t,z):=\inf_{\alpha,\beta\in \mathbb{R}} |F_z(t,\alpha,\beta)|
\end{align*}
and
\begin{align*}
CA_T(z):=\inf_{t\in [0,T]} CA(t,z).
\end{align*}

With these definitions in hand, we are now ready to turn to the particular notion of solution that will be used to establish
our results.  In particular, we give a notion of {\it admissible solution} to the system ($\ref{eqE1}$)--($\ref{eqE5}$), which in
addition to being a smooth solution of the system of PDEs, satisfies a class of additional a priori bounds (what happens in lower-regularity
situations is an area of active research; see, for instance \cite{CEG}).

\begin{definition}[Notion of ($A$, $\textrm{E}_{\textrm{wild}}$, $\textrm{NS}_{\textrm{tame}}$)--admissible solutions]
\label{def-ad}
For $T>0$ and $A>0$, we say that $(u,v,z,p,q)$ is an ($A$, $\textrm{E}_{\textrm{wild}}$, $\textrm{NS}_{\textrm{tame}}$)--admissible 
solution to the system ($\ref{eqE1}$)--($\ref{eqE5}$) if it is a smooth solution to the system satisfying $CA(0,z)\geq 1/A$, $\lVert \widetilde{u}(0,\cdot)\rVert_{L^\infty(\mathbb{R})}\leq A$, $\lVert v(0)\rVert_{C^3(\Omega_{NS}(0))}\leq A$, and 
\begin{align}
\lVert z(t,\alpha)-(\alpha,0)\rVert_{C^4_{\alpha,t}([0,T]\times \mathbb{R})}\leq A,\label{eq-assumption}
\end{align}
together with
\begin{align}
\lVert v\rVert_{C^2(\{(t,x)\in [0,T]\times\mathbb{R}^2:x\in \overline{\Omega_{NS}(t)}\})}\leq A,\label{eq-hypB}
\end{align}
and
\begin{align}
\inf\{|z_\alpha(t,\alpha)|:\alpha\in\mathbb{R},t\in [0,T]\}&\geq \frac{1}{A}.\label{eq-hypC}
\end{align}
\end{definition}

Our main result is the following theorem, which asserts that a splash-type singularity cannot form in finite time.

\begin{theorem}
\label{thm1}
Fix $T>0$ and $A>0$.  Then there exists $C_0>0$ such that if $(u,v,z,p,q)$ is an ($A$, $\textrm{E}_{\textrm{wild}}$, $\textrm{NS}_{\textrm{tame}}$)--admissible solution to the system ($\ref{eqE1}$)--($\ref{eqE5}$), then the condition $CA_T(z)>0$ implies $CA_T(z)\geq C_0$.
\end{theorem}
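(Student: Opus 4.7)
The plan is to establish a pointwise Gronwall-type lower bound on the chord-arc quotient $F_z(t,\alpha,\beta)$, uniformly in the pair $(\alpha,\beta)$, and then take the infimum to recover a lower bound for $CA(t,z)$. For a fixed pair with $\alpha \neq \beta$ I set $d(t) := |z(t,\alpha) - z(t,\beta)|$ and differentiate in $t$. The full matching condition \eqref{eqE4}---which gives $\partial_t z(t,\gamma) = v(t, z(t,\gamma))$ for every $\gamma$---reduces the computation to
\begin{equation*}
\left| \frac{d}{dt} d(t) \right| \leq |v(t, z(t,\alpha)) - v(t, z(t,\beta))|.
\end{equation*}
The whole argument thus hinges on a spatial Lipschitz estimate of the form $|v(t,z(t,\alpha)) - v(t,z(t,\beta))| \leq C_A\, d(t)$ for a constant $C_A$ depending only on $A$.

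The input for this estimate is assumption \eqref{eq-hypB}, giving $|\nabla v| \leq A$ on $\overline{\Omega_{NS}(t)}$, together with the interface regularity \eqref{eq-assumption} and the non-degeneracy \eqref{eq-hypC}. In the geometric situation where the straight segment between $z(t,\alpha)$ and $z(t,\beta)$ lies inside $\overline{\Omega_{NS}(t)}$---a near-splash configuration in which a thin neck of Navier-Stokes fluid separates two close pieces of the interface---the Lipschitz bound follows at once by integrating $\nabla v$ along the segment. The complementary situation is when the near-splash traps a thin pocket of Euler fluid between two close interface pieces; here the straight segment crosses $\Omega_E(t)$ and the direct argument breaks down. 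The plan in that case is to construct a $C^1$ extension of $v$ to a neighborhood of the Euler pocket whose $C^1$ norm is controlled solely by $A$, using the uniform bounds from \eqref{eq-assumption} and \eqref{eq-hypC} to set up a Whitney-type extension with quantitative control on the extended gradient.

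This extension step is the main obstacle, and is precisely where the bulk vorticity of the Navier-Stokes flow enters the analysis. In the purely Euler--Euler setting of \cite{FIL-NoSplash} one can rely on harmonic/conjugate harmonic structure in each subdomain, whereas here any extension of $v$ across the Euler pocket must accommodate the non-trivial $\curl v$ produced by the viscous dynamics while keeping the $C^1$ constant uniform in the geometry (in particular, not degenerating as $d(t)$ becomes small). Once the Lipschitz estimate is secured, Gronwall's inequality yields $d(t) \geq e^{-C_A t}\, d(0)$. Dividing by $|\alpha-\beta|$, passing to the limit $\beta \to \alpha$ to cover the diagonal case, and taking infima over $(\alpha,\beta)$ and $t\in[0,T]$, I conclude using $CA(0,z)\geq 1/A$ that
\begin{equation*}
CA_T(z) \geq \frac{1}{A}\, e^{-C_A T} =: C_0,
\end{equation*}
which is the desired bound.
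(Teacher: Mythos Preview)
Your argument has a genuine gap in the ``hard'' case, and the proposed fix via Whitney extension is circular.

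Consider the configuration where the thin pocket between the two nearby interface pieces lies in $\Omega_E(t)$; by Lemma~\ref{lem2} this is in fact the \emph{only} case that needs work (Lemma~\ref{lem1}(iii) already disposes of the case where the segment lies in $\overline{\Omega_{NS}(t)}$, by exactly the Gronwall argument you describe). In the Euler-pocket case, the two points $z(t,\alpha_1)$ and $z(t,\alpha_2)$ sit on two pieces of $\Gamma_t$ that are a distance $d$ apart in $\mathbb{R}^2$ but far apart in the intrinsic geometry of $\overline{\Omega_{NS}(t)}$. Any $C^1$ function $\overline{v}$ on a neighborhood of the pocket that agrees with $v$ on both pieces of the interface must, by the mean value theorem along the straight segment, satisfy
\[
\lVert\nabla\overline{v}\rVert_{L^\infty}\;\geq\;\frac{|v(t,z(t,\alpha_1))-v(t,z(t,\alpha_2))|}{d}.
\]
Thus the existence of an extension with $\lVert\nabla\overline{v}\rVert_{L^\infty}\leq C_A$ is \emph{equivalent} to the Lipschitz bound $|v(t,z(t,\alpha_1))-v(t,z(t,\alpha_2))|\leq C_A\,d$ you are trying to prove. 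A Whitney-type construction does not help here: its output gradient is governed by the first-order divided differences of the boundary data, and the relevant divided difference is precisely the unknown quantity $|v(z(t,\alpha_1))-v(z(t,\alpha_2))|/d$. The hypotheses \eqref{eq-assumption}--\eqref{eq-hypC} give you control of $\nabla v$ only on $\overline{\Omega_{NS}(t)}$ and of $\widetilde{v}$ in the $\alpha$-variable (so $|\widetilde{v}(t,\alpha_1)-\widetilde{v}(t,\alpha_2)|\lesssim|\alpha_1-\alpha_2|$, which is useless since $|\alpha_1-\alpha_2|$ is bounded \emph{below}); neither yields the needed comparison across the Euler gap.

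The paper resolves this by abandoning the attempt to estimate $\widetilde{v}(t,\alpha_1)-\widetilde{v}(t,\alpha_2)$ directly. At a minimizing pair $(\alpha_1,\alpha_2)$ the vector $z^{(1)}-z^{(2)}$ is normal to the interface at both points, so \eqref{eqE3} lets one replace $\partial_t z=\widetilde{v}$ by $\widetilde{u}$ in the derivative $\widetilde{D}'$. The point is that $\widetilde{u}$ admits the Biot--Savart representation of Proposition~\ref{prop1}, and one can then estimate the difference $(\widetilde{u}(t,\alpha_1)-\widetilde{u}(t,\alpha_2))\cdot e_2$ by singular-integral arguments on $B_\Gamma$ (as in \cite{FIL-NoSplash}) together with a direct kernel estimate on the new bulk term $B_{\textrm{NS}}$. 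This produces a bound of order $d|\log d|$ rather than $d$, leading to a $D\log D$ (double-exponential) Gronwall inequality, which still suffices.
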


The main ingredients in the proof of Theorem \ref{thm1} are a family of a priori estimates for the relevant vorticities, combined with an analysis of the evolution of the interface near a potential splash point.  We make use of a geometric reduction first introduced in \cite{FIL-NoSplash} to reduce the analysis to a particularly simple setting.  The argument then reduces to the analysis of the contributions of two terms which control the formation of a potential point of self-intersection.  The first term arises from the vorticity concentrated on the interface, which is estimated as in \cite{FIL-NoSplash} by estimating the resulting singular integrals.  The second contribution, which is novel in our setting, is the contribution of the bulk vorticity interior to the Navier-Stokes flow.  

To the best of our knowledge, this is the first result on {\it preclusion} of splash-type singularities in which non-trivial vorticity plays a role.  While we do not discuss the existence of weak solutions to this system, or issues such as weak-strong uniqueness, there have been a number of recent works which address these points for related systems (see, for instance \cite{FH}).


\subsection*{Outline of the paper}

In Section $2$ below we discuss a preliminary reformulation of the system of equations ($\ref{eqE1}$)--($\ref{eqE5}$), and give the precise statement of our main results.  In Section $3$, we obtain an a priori bound for the viscosity and recall some preliminary arguments on the nature of a potential splash singularity from \cite{FIL-NoSplash}.  In Section $4$ we establish our main result, Theorem $\ref{thm1}$.  A brief appendix contains some supporting arguments and concluding remarks.

\subsection*{Acknowledgments}

The author is very grateful to Charlie Fefferman for many valuable conversations during the preparation of this work.

\section{Derivation of the free interface equations}

In this section, we rewrite the system ($\ref{eqE1}$)--($\ref{eqE5}$) in terms of a system posed on the closure of the domain of the Navier-Stokes fluid, given by $$\overline{\Omega_{NS}(t)}=\Omega_{NS}(t)\cup \Gamma_t$$ for $t>0$.  This is based on a study of the evolution of the interface vorticity $\omega_\Gamma$, 
defined by
\begin{align*}
\omega_\Gamma(t,\alpha)&:=\Big(\widetilde{u}(t,\alpha)-\widetilde{v}(t,\alpha)\Big)\cdot (\partial_\alpha z)(t,\alpha),
\end{align*}
for $t\in I$ and $\alpha\in\mathbb{R}$, where $\widetilde{u}$ and $\widetilde{v}$ are, as defined in ($\ref{def-ut-vt}$), the 
parameterized restrictions of $u$ (the velocity of the Euler fluid) and $v$ (the velocity of the Navier-Stokes fluid) to 
the interface $\Gamma_t$.

We interpret the model of Section 1 in the sense of asking that, for a given triple $(v,q,z)$, the pair $(u,p)$ should be a (weak) solution of
\begin{align*}
\left\lbrace\begin{array}{ll}\rho_E(\partial_t u+(u\cdot\nabla)u)=-\nabla p-g\rho_Ee_2&\quad\textrm{in}\,\,\Omega_{E}(t),\\
\divop u=\curl u=0&\quad\textrm{in}\,\,\Omega_{E}(t),\\
(\partial_t z-u(t,z(t,\alpha)))\cdot {\mathbf n}=0&\quad\textrm{on}\,\,\Gamma,\\
(p-q)\cdot (-{\mathbf n})-2\nu_{NS}D(v)|_{t,z(t,\alpha)}(-{\mathbf n})=-\sigma K{\mathbf n}&\quad\textrm{on}\,\,\Gamma.
\end{array}\right.
\end{align*}

For notational convenience, we let $\omega_v$ denote the vorticity internal to the Navier-Stokes fluid, defined by
\begin{align*}
\omega_v(t,x)=\curl(v)\chi_{\Omega_{NS}}(t).
\end{align*}
for $t\in I$ and $x\in\mathbb{R}^2$.

Two important terms in the resulting equations are the Birkhoff-Rott integral associated to the interface vorticity $\omega_\Gamma$
\begin{align*}
B_\Gamma(z,\omega_\Gamma;t,\alpha)&:=(2\pi)^{-1}p.v.\int_{\mathbb{R}} \frac{(z(t,\alpha)-z(t,\beta))^\perp}{|z(t,\alpha)-z(t,\beta)|^2}\omega_\Gamma(t,\beta)d\beta,
\end{align*}
and the corresponding contribution of the ``bulk'' vorticity $\omega_v$,
\begin{align*}
B_{\textrm{NS}}(z,\omega_v;t,\alpha)&:=\int_{\Omega_{NS}(t)} \frac{(z(t,\alpha)-y)^\perp}{|z(t,\alpha)-y|^2}\omega_v(t,y)dy.
\end{align*}
The relevance of these operators is shown by the next proposition.

\begin{proposition}
\label{prop1}
Fix $T>0$, suppose $(u,v,z,p,q)$ solves the system ($\ref{eqE1}$)--($\ref{eqE5}$) on the time interval $I=[0,T]$, and that each of these functions is periodic in the ${\mathbf e}_1$ direction (with period $2\pi$).  Suppose $z\in C^4(I\times \mathbb{R};\mathbb{R}^2)$ and that $u$ and $v$ belong to appropriate function spaces.  

Then, with $\tilde{u}$ as in (\ref{def-ut-vt}) and  $\omega_\Gamma$, $\omega_v$, $B_{\Gamma}$ and $B_{\textrm{NS}}$ as above, we have
\begin{align}
\widetilde{u}(t,\alpha)&=\frac{\omega_\Gamma\, \partial_\alpha z}{2|\partial_\alpha z|^2}\bigg|_{(t,\alpha)}+B_\Gamma(z,\omega_\Gamma;t,\alpha)+B_{\textrm{NS}}(z,\omega_v;t,\alpha),\label{eq1}
\end{align}
\end{proposition}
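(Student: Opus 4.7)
The plan is to reconstruct the Euler velocity globally by a Biot--Savart representation and then read off $\widetilde{u}$ as the boundary trace from the Euler side. The object to work with is the stitched velocity
\begin{align*}
V(t,x):=u(t,x)\chi_{\Omega_E(t)}(x)+v(t,x)\chi_{\Omega_{NS}(t)}(x),
\end{align*}
defined on all of $\mathbb{R}^2$ and periodic in the $\mathbf{e}_1$ direction.

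First I would compute $\divop V$ and $\curl V$ distributionally. Inside each phase these coincide with the pointwise operators, so $\divop V=0$ in the bulk, $\curl V=0$ in $\Omega_E(t)$ by hypothesis, and $\curl V=\omega_v$ in $\Omega_{NS}(t)$. On $\Gamma_t$ the kinematic conditions (\ref{eqE3})--(\ref{eqE4}) force $u\cdot{\mathbf n}=\partial_t z\cdot{\mathbf n}=v\cdot{\mathbf n}$, so the normal component of $V$ does not jump and no singular contribution to $\divop V$ arises; only the tangential component can jump. With $\hat\tau:=\partial_\alpha z/|\partial_\alpha z|$, the definition $\omega_\Gamma=(\widetilde{u}-\widetilde{v})\cdot\partial_\alpha z$ yields the tangential jump $[V]_\tau=\omega_\Gamma/|\partial_\alpha z|$, and after the change of variable $dy|_{\Gamma_t}=|\partial_\beta z|\,d\beta$ the induced line measure on $\Gamma_t$ has density exactly $\omega_\Gamma(t,\beta)$ in the $\alpha$-parameterization.

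Second, I would apply the two-dimensional Biot--Savart law (periodized in $\mathbf{e}_1$) to recover $V$ from $\curl V$ and split the representation according to the bulk and interface pieces. The bulk piece is (up to normalization constants in the kernel) $B_{\textrm{NS}}(z,\omega_v;t,\alpha)$ and is continuous up to $\Gamma_t$, so it traces without modification. The interface piece is a Cauchy-type singular integral whose principal value at $z(t,\alpha)$ is $B_\Gamma(z,\omega_\Gamma;t,\alpha)$. Taking the limit $x\to z(t,\alpha)$ from $\Omega_E(t)$ and invoking the classical Plemelj jump relation for a vortex sheet, the one-sided limit equals the principal value plus $+\tfrac12$ times the sheet strength in the tangential direction, namely $\tfrac12\cdot(\omega_\Gamma/|\partial_\alpha z|)\cdot\hat\tau=\omega_\Gamma\,\partial_\alpha z/(2|\partial_\alpha z|^2)$. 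Summing the three contributions produces (\ref{eq1}).

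The hard part will be the sign and orientation bookkeeping in the jump step: one must verify that (\ref{eqE3})--(\ref{eqE4}) really cancel any normal discontinuity in $V$ (so that $\divop V$ has no singular part), that the convention ${\mathbf n}=-(\partial_\alpha z)^\perp$ places the half-residue on the Euler side with the positive sign required by (\ref{eq1}), and that the periodization of the Biot--Savart kernel only contributes a smooth correction near the diagonal, so that the local singular structure responsible for the jump is unaffected. The harmonic/zero-mode ambiguity intrinsic to Biot--Savart inversion on the periodic strip must also be pinned down by an implicit normalization of the mean of $V$ compatible with the proposition's framework. Throughout, the hypothesis $\curl u=0$ in $\Omega_E(t)$ is used essentially, as it is precisely what prevents an additional bulk Euler term from appearing on the right-hand side.
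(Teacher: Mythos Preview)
Your proposal is correct and follows essentially the same route as the paper: represent the velocity via Biot--Savart in terms of the interface sheet $\omega_\Gamma$ and the bulk vorticity $\omega_v$, then pass to the boundary from the Euler side and invoke the Plemelj jump relation to pick up the half-residue $\omega_\Gamma\,\partial_\alpha z/(2|\partial_\alpha z|^2)$. The paper's proof is extremely terse---it simply asserts the Biot--Savart formula for $u$ in $\Omega_E(t)$ and then appeals to ``standard limiting arguments for functions defined via contour integrals''---whereas you make the distributional computation explicit by stitching $u$ and $v$ into a global field $V$ and verifying that the kinematic conditions kill any normal jump (hence no singular divergence), leaving only the tangential vortex sheet. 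Your caveats about orientation, periodization, and the zero-mode are well taken and are exactly the loose ends the paper suppresses under the phrase ``appropriate function spaces'' and the invocation of standard results.
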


\begin{proof}
Fix $t\in I$, and note that by the Biot-Savart law, one has
\begin{align*}
u(t,x)=(2\pi)^{-1}p.v.\int_{\mathbb{R}} \frac{(x-z(t,\beta))^\perp}{|x-z(t,\beta)|^2}\omega_\Gamma(t,\beta)d\beta+\int_{\Omega_{NS}(t)} \frac{(x-y)^\perp}{|x-y|^2}\omega_v(t,y)dy
\end{align*}
for $x\in \Omega_{E}(t)$.  By the regularity of the domain $\Omega_{E}(t)$, standard limiting arguments for functions defined via contour integrals give
\begin{align*}
&\lim_{\epsilon\downarrow 0}\bigg\lVert u(t,z(t,\alpha)-\epsilon {\mathbf n}(t,\alpha))\\
&\hspace{0.8in}-\bigg(\frac{\omega_\Gamma\, \partial_\alpha z}{2|\partial_\alpha z|^2}\bigg|_{(t,\alpha)}+B_\Gamma(z,\omega_\Gamma;t,\alpha)+B_{\textrm{NS}}(z,\omega_v;t,\alpha)\bigg)\bigg\rVert_{L_{\alpha}^\infty(\mathbb{R})}=0.
\end{align*}
\end{proof}

The reduction of the equation for $u$ to an equation for $\widetilde{u}$ is given by the following proposition. 
In the statement and proof, as well as in the remainder of this paper, we use the notation 
$$ \widetilde{p}(t,\alpha):=p(t,z(t,\alpha)),\quad \widetilde{q}(t,\alpha):=q(t,z(t,\alpha)),$$
when $t\in I$ and $\alpha\in\mathbb{R}$.

\begin{proposition}
\label{prop1b}
Fix $T>0$ and let $(u,v,z,p,q)$ be as in Proposition \ref{prop1}.  Then
\begin{align}
\partial_t(\widetilde{u}\cdot \partial_\alpha z)-\partial_\alpha(\widetilde{u}\cdot \partial_t z)+\frac{1}{2}\partial_\alpha[|\widetilde{u}|^2]+\frac{1}{\rho_E}\partial_\alpha\widetilde{p}+g\partial_\alpha z_2\bigg|_{(t,\alpha)}=0\label{eq3}
\end{align}
holds for all $t\in I$ and $\alpha\in\mathbb{R}$.
\end{proposition}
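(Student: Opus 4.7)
The plan is to derive the identity by evaluating the Euler momentum equation on the interface and then taking its tangential (i.e., $\partial_\alpha z$) component, so that the structure of a pure transport along the curve emerges.

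First I would set $x=z(t,\alpha)$ in the Euler equation of (\ref{eqE1}) and rewrite each Eulerian time- and space-derivative of $u$ at $z$ in terms of derivatives of $\widetilde{u}$. The chain rule identities
\begin{align*}
(\partial_t u)(t,z(t,\alpha))
&=\partial_t\widetilde{u}(t,\alpha)-\bigl((\partial_t z\cdot\nabla)u\bigr)(t,z(t,\alpha)),\\
(\partial_\alpha z\cdot\nabla u)(t,z(t,\alpha))
&=\partial_\alpha\widetilde{u}(t,\alpha),\qquad \nabla p(t,z(t,\alpha))\cdot\partial_\alpha z=\partial_\alpha\widetilde{p}(t,\alpha),
\end{align*}
together with $(u\cdot\nabla)u|_{z}=(\widetilde{u}\cdot\nabla)u|_{z}$, turn the Euler equation restricted to $\Gamma_t$ into
\begin{align*}
\rho_E\Bigl[\partial_t\widetilde{u}+\bigl((\widetilde{u}-\partial_t z)\cdot\nabla\bigr)u\big|_{z}\Bigr]
=-\nabla p|_{z}-g\rho_E\mathbf{e}_2.
\end{align*}

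Next I would take the inner product with $\partial_\alpha z$. The crucial step is to convert the surviving convective term into something intrinsic to the interface, and this is where the kinematic boundary condition (\ref{eqE3}) is used: since $(\partial_t z-\widetilde{u})\cdot\mathbf{n}=0$ and $\mathbf{n}=-(\partial_\alpha z)^\perp$, the vector $\widetilde{u}-\partial_t z$ is tangent to the curve, so there is a scalar $c(t,\alpha)$ with $\widetilde{u}-\partial_t z=c\,\partial_\alpha z$. Consequently
\begin{align*}
\bigl((\widetilde{u}-\partial_t z)\cdot\nabla u\bigr)\big|_{z}\cdot\partial_\alpha z
=c\,\bigl(\partial_\alpha z\cdot\nabla u\bigr)\big|_{z}\cdot\partial_\alpha z
=\partial_\alpha\widetilde{u}\cdot(\widetilde{u}-\partial_t z).
\end{align*}
This removes all reference to the bulk gradient $\nabla u$ and leaves only quantities defined on $\Gamma_t$.

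Finally I would rearrange the remaining terms using product- and chain-rule identities. Writing
\begin{align*}
\partial_t\widetilde{u}\cdot\partial_\alpha z
=\partial_t(\widetilde{u}\cdot\partial_\alpha z)-\widetilde{u}\cdot\partial_\alpha(\partial_t z)
=\partial_t(\widetilde{u}\cdot\partial_\alpha z)-\partial_\alpha(\widetilde{u}\cdot\partial_t z)+\partial_\alpha\widetilde{u}\cdot\partial_t z,
\end{align*}
adding $\partial_\alpha\widetilde{u}\cdot(\widetilde{u}-\partial_t z)$, and collapsing $\partial_\alpha\widetilde{u}\cdot\widetilde{u}=\tfrac{1}{2}\partial_\alpha|\widetilde{u}|^2$, produces precisely the left-hand side of (\ref{eq3}) after dividing by $\rho_E$ and accounting for the gravity contribution $-g\rho_E\mathbf{e}_2\cdot\partial_\alpha z=-g\rho_E\,\partial_\alpha z_2$.

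There is no real obstacle here; the one step that must be spotted is the tangency $\widetilde{u}-\partial_t z\parallel\partial_\alpha z$ afforded by (\ref{eqE3}), which is what lets the convective derivative $(\widetilde{u}\cdot\nabla)u$ on $\Gamma_t$ be rewritten purely in terms of $\partial_\alpha\widetilde{u}$. The irrotationality and divergence-freeness of $u$ are not needed for this identity itself (they enter elsewhere, e.g.\ in Proposition \ref{prop1}); the only inputs are the Euler momentum equation and the kinematic boundary condition.
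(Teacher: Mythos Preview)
Your argument is correct, but it takes a genuinely different route from the paper's. The paper expands $\partial_t(\widetilde{u}\cdot\partial_\alpha z)$, $-\partial_\alpha(\widetilde{u}\cdot\partial_t z)$, and $\tfrac{1}{2}\partial_\alpha|\widetilde{u}|^2$ directly via the chain rule into sums of terms of the form $(\partial_i u_j)(\partial_\bullet z_k)(\partial_\bullet z_\ell)$, and then cancels the residual cross terms using $\curl u=0$ (invoked twice). It never uses the kinematic boundary condition (\ref{eqE3}). Your proof, by contrast, uses (\ref{eqE3}) to observe that $\widetilde{u}-\partial_t z$ is tangent to $\Gamma_t$, which lets you write $((\widetilde{u}-\partial_t z)\cdot\nabla)u|_{z}=c\,\partial_\alpha\widetilde{u}$ without ever appealing to irrotationality. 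So your closing remark that $\curl u=0$ is not needed is correct for \emph{your} argument, but be aware that in the paper's derivation irrotationality is precisely the mechanism that makes the identity close up. The trade-off is this: the paper's approach transfers verbatim to the Navier--Stokes velocity $\widetilde{v}$ in Lemma~\ref{lem-vtilde}, where $\curl v\neq 0$ simply leaves behind an explicit $\curl v$ term; your approach would instead exploit the stronger no-slip condition (\ref{eqE4}), which makes $\widetilde{v}-\partial_t z=0$ and kills the convective term outright, yielding the same result by a slightly different bookkeeping.
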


\begin{proof}
We compute 
\begin{align*}
\partial_t(\widetilde{u}\cdot\partial_\alpha z)=\sum_{j=1}^2 (\partial_tu_j)(\partial_\alpha z_j)+(\partial_1u_j)(\partial_t z_1)(\partial_\alpha z_j)+(\partial_2u_j)(\partial_t z_2)(\partial_\alpha z_j)+(u_j)(\partial_{t,\alpha} z_j),
\end{align*}
where each instance of $u$ is evaluated at $(t,z(t,\alpha))$, and, with the same convention,
\begin{align*}
-\partial_\alpha(\widetilde{u}\cdot\partial_tz)=-\sum_{j=1}^2 (\partial_1u_j)(\partial_\alpha z_1)(\partial_t z_j)+(\partial_2u_j)(\partial_\alpha z_2)(\partial_t z_j)-(u_j)(\partial_{\alpha,t}z_j),
\end{align*}
together with
\begin{align*}
\frac{1}{2}\partial_\alpha[|\widetilde{u}|^2]&=\sum_{j=1}^2 (u_j)(\partial_\alpha u_j)\\
&=\sum_{j=1}^2 (u_j)(\partial_1 u_j)(\partial_\alpha z_1)+(u_j)(\partial_2 u_j)(\partial_\alpha z_2).
\end{align*}

Assembling these, 
\begin{align*}
&\partial_t(\widetilde{u}\cdot\partial_\alpha z)-\partial_\alpha(\widetilde{u}\cdot \partial_tz)+\frac{1}{2}\partial_\alpha[|\widetilde{u}|^2]\\
&\hspace{0.2in}=\bigg(\sum_{j=1}^2 (\partial_t u_j)(\partial_\alpha z_j)+(u_j)(\partial_1 u_j)(\partial_\alpha z_1)+(u_j)(\partial_2 u_j)(\partial_\alpha z_2)\bigg)\\
&\hspace{0.4in}+(\partial_1u_2)(\partial_t z_1)(\partial_\alpha z_2)+(\partial_2 u_1)(\partial_t z_2)(\partial_\alpha z_1)\\
&\hspace{0.4in}-(\partial_1 u_2)(\partial_\alpha z_1)(\partial_t z_2)-(\partial_2 u_1)(\partial_\alpha z_2)(\partial_t z_1).
\end{align*}
Since $\curl u=\partial_2u_1-\partial_1 u_2=0$, the second and third lines in the above expression sum to $0$.  

Next, using the equation for evolution of $u$ (evaluated on the interface $\Gamma$), the left-hand side of the above expression is equal to
\begin{align*}
&\sum_{j=1}^2 \bigg(-\big[\sum_{k=1}^2 (u_k)(\partial_k u_j)\big]-\rho_E^{-1}(\partial_jp)-g\delta_{j,2}\bigg)(\partial_\alpha z_j)\\
&\hspace{0.6in}+(u_j)(\partial_1u_j)(\partial_\alpha z_1)+(u_j)(\partial_2u_j)(\partial_\alpha z_2)=-\rho_E^{-1}\partial_\alpha\widetilde{p}-g(\partial_\alpha z_2),
\end{align*}
where we have again used $\curl u=0$, and in addition noted that $\partial_\alpha \widetilde{p}=(\partial_1p)(\partial_\alpha z_1)+(\partial_2p)(\partial_\alpha z_2)$ with notational conventions as above.  This gives the desired identity.
\end{proof}

To conclude this section, we collect the above with the interface condition (\ref{eqE3})
to obtain an equivalent form of the system of PDEs, which will form the basis for our study in the rest of 
the paper.  In particular, for unknowns $(\omega_\Gamma, v,z,\widetilde{p},q)$, the system described 
above is equivalent to
\begin{align*}
\left\lbrace\begin{array}{ll}
(\partial_t z(t,\alpha)-\widetilde{u}(t,\alpha))\cdot \mathbf{n}_2(t,\alpha)=0,\\
(\partial_t (\widetilde{u}\cdot\partial_\alpha z)-\partial_\alpha(\widetilde{u}\cdot \partial_t z)+\frac{1}{2}\partial_\alpha[|\widetilde{u}|^2]+\frac{1}{\rho_E}\partial_\alpha \widetilde{p}+g\partial_\alpha z_2)|_{t,\alpha}=0,\\
\rho_{NS}(\partial_t v+(v\cdot\nabla)v)=\nu_{NS}\Delta v-\nabla q-g\rho_{NS}{\mathbf e}_2,\\
\partial_t z(t,\alpha)-v(t,z(t,\alpha))=0,\\
(\widetilde{p}+\widetilde{q}){\mathbf n}-2\nu_{NS}D(v)|_{t,z}{\mathbf n}=-\sigma K_z(\alpha){\mathbf n},\\
\divop(v)=0,
\end{array}\right.
\end{align*}
with formulas for $\widetilde{u}$ and $\omega_\Gamma$ given above (depending on the above unknowns).  Here, the first second, fourth, and fifth lines are equations on the interface (i.e. for $t>0$ and $\alpha\in \mathbb{R}$), while the third and sixth lines are equations in the interior of the Navier-Stokes fluid.

\section{Bounds on the vorticity and initial characterization of a splash point}

In this section, we establish a priori bounds on $\omega_\Gamma$, the vorticity concentrated on the boundary, and formulate a geometric
reduction which rescales and orients a candidate splash point to a generic position amenable to our analysis in the rest of this paper.

We begin by observing that the condition (\ref{eq-hypB}) in the definition of $(A,\textrm{E}_{\textrm{wild}},\textrm{NS}_{\textrm{tame}})$--admissibility implies
\begin{align*}
\sup_{t\in [0,T]} \lVert \omega_v\rVert_{L_x^p(\Omega_{NS}(t))}\leq C,\quad 1\leq p\leq \infty.
\end{align*}

Our a priori estimate for $\omega_\Gamma$ is then as follows:

\begin{proposition}
\label{lem0}
Fix $T>0$ and $A>0$, and suppose that $(\omega_\Gamma,v,z,\widetilde{p},q)$ is an ($A$, $\textrm{E}_{\textrm{wild}}$, $\textrm{NS}_{\textrm{tame}}$)--admissible solution of the system ($\ref{eqE1}$)--($\ref{eqE5}$).  Then there exists $C>0$ such that $$\lVert \omega_\Gamma\rVert_{L^\infty([0,T]\times \mathbb{R})}\leq C.$$
\end{proposition}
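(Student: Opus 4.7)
The plan is to derive an evolution equation for $\omega_\Gamma$ along the interface and then bound it through a Burgers-type maximum principle. Starting from $\omega_\Gamma=(\widetilde u-\widetilde v)\cdot\partial_\alpha z$ and differentiating in time, I would substitute the Bernoulli-type identity (\ref{eq3}) from Proposition \ref{prop1b} for $\partial_t(\widetilde u\cdot\partial_\alpha z)$, and compute $\partial_t(\widetilde v\cdot\partial_\alpha z)$ directly from the Navier--Stokes equation together with the no-slip relation $\partial_tz=\widetilde v$. The key algebraic cancellation is that the gravity terms cancel, the mixed term $\partial_\alpha(\widetilde u\cdot\widetilde v)$ combines with $\tfrac12\partial_\alpha|\widetilde u|^2+\tfrac12\partial_\alpha|\widetilde v|^2$ into $-\tfrac12\partial_\alpha|\widetilde u-\widetilde v|^2$, and the kinematic conditions (\ref{eqE3})--(\ref{eqE4}) force $\widetilde u-\widetilde v$ to be parallel to $\partial_\alpha z$, so $|\widetilde u-\widetilde v|^2=\omega_\Gamma^2/|\partial_\alpha z|^2$. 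This yields a scalar equation of the form
\begin{equation*}
\partial_t\omega_\Gamma+\tfrac12\partial_\alpha\Big(\tfrac{\omega_\Gamma^2}{|\partial_\alpha z|^2}\Big)=-\tfrac{1}{\rho_E}\partial_\alpha\widetilde p+\tfrac{1}{\rho_{NS}}\partial_\alpha\widetilde q-\tfrac{\nu_{NS}}{\rho_{NS}}(\Delta v)|_{t,z}\cdot\partial_\alpha z=:F.
\end{equation*}

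Next, I would show $\|F\|_{L^\infty}\le C(A)$. The normal component of the stress boundary condition (\ref{eqE5}) expresses $\widetilde p+\widetilde q$ as $2\nu_{NS}D(v)\hat{\mathbf n}\cdot\hat{\mathbf n}-\sigma K$, which is bounded in $C^1_\alpha$ by admissibility ($\|v\|_{C^2}\le A$, $\|z\|_{C^4}\le A$, $|\partial_\alpha z|\ge1/A$). The Navier--Stokes equation, evaluated in the closed domain, gives $\nabla q$ bounded in $L^\infty$ in terms of $\|v\|_{C^2}$, so $\partial_\alpha\widetilde q=(\nabla q)|_{t,z}\cdot\partial_\alpha z$ is bounded, and subtracting recovers the $L^\infty$ bound on $\partial_\alpha\widetilde p$. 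The viscous trace $(\Delta v)|_{t,z}\cdot\partial_\alpha z$ is likewise controlled directly. The initial bound $|\omega_\Gamma(0,\alpha)|\le C(A)$ follows from $\|\widetilde u(0)\|_{L^\infty}\le A$, $\|v(0)\|_{C^3}\le A$ and the admissibility assumption on $z(0)$.

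The main obstacle is that a direct maximum-principle argument for $\omega_\Gamma$ produces a Riccati-type term of the form $-\tfrac12\omega_\Gamma^2\partial_\alpha(1/|\partial_\alpha z|^2)$, whose coefficient is not sign-definite and which would in principle allow finite-time blow-up of the pointwise bound. I would circumvent this by changing unknown to the tangential velocity jump $\tau:=\omega_\Gamma/|\partial_\alpha z|=(\widetilde u-\widetilde v)\cdot\hat{\mathbf t}$, and noting that in terms of the arc-length derivative $\partial_s=|\partial_\alpha z|^{-1}\partial_\alpha$ the equation rewrites as
\begin{equation*}
\partial_t\tau+\tau\,\partial_s\tau=\frac{F}{|\partial_\alpha z|}-\tau\,\partial_t\log|\partial_\alpha z|,
\end{equation*}
i.e., a forced Burgers equation with a bounded (linear) lower-order term, since $\partial_t|\partial_\alpha z|=\partial_\alpha z\cdot\partial_\alpha\widetilde v/|\partial_\alpha z|$ is bounded via $\|z\|_{C^4}\le A$. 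At a point where $|\tau|$ attains its supremum in $\alpha$, one has $\partial_\alpha\tau=0$, hence the transport term $\tau\partial_s\tau$ vanishes there, and the standard Rademacher/max-principle argument yields $\tfrac{d}{dt}\|\tau(t)\|_{L^\infty}\le C(A)(1+\|\tau(t)\|_{L^\infty})$. Gronwall's inequality then gives $\|\tau\|_{L^\infty([0,T]\times\mathbb{R})}\le C(A,T)$, and translating back via $\omega_\Gamma=\tau|\partial_\alpha z|$ with $|\partial_\alpha z|\le A$ completes the proof.
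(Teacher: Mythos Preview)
Your derivation of the evolution equation for $\omega_\Gamma$ is correct and essentially matches what the paper obtains (the paper routes through Lemma~\ref{lem-vtilde} and a slightly different algebraic decomposition, but the resulting Burgers-type structure $\partial_t\omega_\Gamma+\tfrac12\partial_\alpha(\omega_\Gamma^2/|\partial_\alpha z|^2)=\text{forcing}$ is the same). Your argument that the forcing is bounded in $L^\infty$ is also sound: the $C^2$ space-time bound on $v$ controls $\partial_t v$ and hence $\nabla q$ via the Navier--Stokes equation, and the stress balance then closes the bound on $\partial_\alpha\widetilde p$.

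Where you genuinely diverge from the paper is in how you close the estimate. The paper follows \cite{FIL-NoSplash} and runs an $L^p$ energy argument: it sets $I(t)=\int_0^{2\pi}\omega_\Gamma^p/|\partial_\alpha z|^{p-1}\,d\alpha$, integrates by parts against the Burgers term, obtains $|I'(t)|\lesssim p(I(t)+1)$, applies Gronwall, and sends $p\to\infty$. Your route---substituting $\tau=\omega_\Gamma/|\partial_\alpha z|$ to obtain a clean forced Burgers equation and then applying the pointwise maximum principle (the transport term $\tau\partial_s\tau$ vanishes at an extremum)---is more elementary and avoids the $p\to\infty$ limit entirely. It works here precisely because you have shown the forcing is bounded in $L^\infty$ outright; the $L^p$ method has the advantage that it would also handle forcings of the form $\partial_\alpha F$ with $F$ only $O(\omega_\Gamma)$, which is the structure emphasized in \cite{FIL-NoSplash}. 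In the present setting, since $\partial_t z=\widetilde v$ makes the paper's $F$-term vanish identically and the remaining terms are genuinely bounded, your direct approach is a legitimate and arguably cleaner alternative.
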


A key tool in the proof of Proposition $\ref{lem0}$ is the following lemma, which identifies an equation for $\widetilde{v}$, the restriction to the interface of the Navier-Stokes flow, which is analogous to the equation (\ref{eq3}) we derived for $\widetilde{u}$ in Proposition \ref{prop1b} of the previous section.

\begin{lemma}
\label{lem-vtilde}
Let $T$, $A$, $(\omega_\Gamma,v,z,\widetilde{p},q)$ be as in Proposition \ref{lem0}.  Then
\begin{align}
\nonumber &\partial_t(\widetilde{v}\cdot \partial_\alpha z)-\partial_\alpha(\widetilde{v}\cdot \partial_t z)+\frac{1}{2}\partial_\alpha[|\widetilde{v}|^2]\\
&\hspace{1.2in}+\frac{1}{\rho_{NS}}\partial_\alpha\widetilde{q}+g\partial_\alpha z_2-\frac{\nu_{NS}}{\rho_{NS}}(\Delta v)\cdot (\partial_\alpha z)\bigg|_{(t,\alpha)}=0\label{eq3b}
\end{align}
holds for all $t\in I$, $\alpha\in\mathbb{R}$.
\end{lemma}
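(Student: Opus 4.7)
The plan is to mimic the proof of Proposition \ref{prop1b} as closely as possible, with two modifications reflecting the differences between the Euler and Navier-Stokes fluids: the failure of $\curl v=0$, and the presence of the viscous term. Compensating for the loss of $\curl v = 0$ is the fact that, by the boundary condition (\ref{eqE4}), the kinematic relation on the interface is the full equality $\partial_t z = \widetilde{v}$ rather than only its normal component, which is a stronger substitution than we had available for $\widetilde{u}$.

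First I would expand $\partial_t(\widetilde{v}\cdot \partial_\alpha z)$, $-\partial_\alpha(\widetilde{v}\cdot \partial_t z)$, and $\tfrac{1}{2}\partial_\alpha|\widetilde{v}|^2$ by the chain rule, using $\widetilde{v}(t,\alpha)=v(t,z(t,\alpha))$, with each instance of $v$ and $\nabla v$ understood to be evaluated at $(t,z(t,\alpha))$. The mixed-derivative contributions $v_j\,\partial_{t,\alpha}z_j$ and $-v_j\,\partial_{\alpha,t}z_j$ from the first two terms cancel immediately, leaving three packets of first-order quantities.

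Next I would substitute $\partial_t z_k = v_k(t,z(t,\alpha))$ from (\ref{eqE4}) into the remaining cross terms. In the Euler proof, the two cross terms from $\partial_t(\widetilde{u}\cdot \partial_\alpha z)$ and $-\partial_\alpha(\widetilde{u}\cdot \partial_t z)$ were collapsed using $\curl u=0$; that tool is unavailable here. However, once $\partial_t z$ is replaced by $\widetilde{v}$, the terms
\begin{align*}
-\sum_{j,k}(\partial_k v_j)(\partial_\alpha z_k)(\partial_t z_j)\quad\text{and}\quad \sum_{j,k} v_j(\partial_k v_j)(\partial_\alpha z_k)
\end{align*}
become exact negatives of each other and cancel directly. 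What survives is precisely $\sum_{j}[\partial_t v_j + (v\cdot \nabla)v_j]\,\partial_\alpha z_j$ evaluated on $\Gamma_t$, i.e.\ the material derivative of $v$ dotted with $\partial_\alpha z$.

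Finally I would use the Navier-Stokes equation (\ref{eqE1}) to replace $\partial_t v_j + (v\cdot\nabla)v_j$ by $\rho_{NS}^{-1}[\nu_{NS}\Delta v_j - \partial_j q - g\rho_{NS}\delta_{j,2}]$, and recognize $\sum_j(\partial_j q)(\partial_\alpha z_j) = \partial_\alpha \widetilde{q}$ via chain rule. Rearranging yields exactly (\ref{eq3b}); the extra term $-(\nu_{NS}/\rho_{NS})(\Delta v)\cdot\partial_\alpha z$ that distinguishes this identity from its Euler counterpart (\ref{eq3}) is just the interface trace of the viscous stress. I do not expect a genuine obstacle in the calculation—the only point requiring care is the replacement $\partial_t z=\widetilde{v}$ in place of the Euler-style appeal to vanishing curl, and verifying that the chain-rule bookkeeping is consistent with the regularity guaranteed by admissibility (\ref{eq-hypB})--(\ref{eq-hypC}).
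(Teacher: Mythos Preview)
Your proposal is correct and follows essentially the same approach as the paper: expand the three terms via the chain rule, substitute the boundary condition $\partial_t z=\widetilde{v}$ and the Navier--Stokes equation, and verify that the $(v_i)(\partial_j v_k)(\partial_\alpha z_\ell)$ terms cancel. The only cosmetic difference is that the paper first collects the cross terms into a contribution $-(\curl v)\big[(\partial_t z_1)(\partial_\alpha z_2)-(\partial_t z_2)(\partial_\alpha z_1)\big]$ before substituting $\partial_t z=v$, whereas you substitute $\partial_t z=v$ immediately and observe the cancellation directly---the same algebra in a slightly different order.
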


\begin{proof}
We argue similarly as in the proof of Proposition \ref{prop1b}, while noting that the curl of $v$ does not necessarily vanish in $\Omega_{NS}(t)$.  In particular, as before
\begin{align*}
\partial_t(\widetilde{v}\cdot\partial_\alpha z)&=\sum_{j=1}^2 \bigg[(\partial_tv_j)(\partial_\alpha z_j)\\
&\hspace{0.2in}+(\partial_1v_j)(\partial_t z_1)(\partial_\alpha z_j)+(\partial_2v_j)(\partial_t z_2)(\partial_\alpha z_j)+(v_j)(\partial_{t,\alpha} z_j)\bigg],
\end{align*}
and
\begin{align*}
-\partial_\alpha(\widetilde{v}\cdot\partial_tz)=-\sum_{j=1}^2 (\partial_1v_j)(\partial_\alpha z_1)(\partial_t z_j)+(\partial_2v_j)(\partial_\alpha z_2)(\partial_t z_j)-(v_j)(\partial_{\alpha,t}z_j),
\end{align*}
where both of these equalities are understood with the convention that $v$ is evaluated at $(t,z(t,\alpha))$.

Moreover, also as before,
\begin{align*}
\frac{1}{2}\partial_\alpha[|\widetilde{v}|^2]&=\sum_{j=1}^2 (v_j)(\partial_1 v_j)(\partial_\alpha z_1)+(v_j)(\partial_2 v_j)(\partial_\alpha z_2).
\end{align*}

Recalling that $\curl v=\partial_2 v_1-\partial_1v_2$, this leads to
\begin{align*}
&\partial_t(\widetilde{v}\cdot\partial_\alpha z)-\partial_\alpha(\widetilde{v}\cdot \partial_tz)+\frac{1}{2}\partial_\alpha[|\widetilde{v}|^2]\\
&\hspace{0.2in}=\bigg(\sum_{j=1}^2 (\partial_t v_j)(\partial_\alpha z_j)+(v_j)(\partial_1 v_j)(\partial_\alpha z_1)+(v_j)(\partial_2 v_j)(\partial_\alpha z_2)\bigg)\\
&\hspace{0.4in}-(\curl v)\Big((\partial_t z_1)(\partial_\alpha z_2)-(\partial_t z_2)(\partial_\alpha z_1)\Big).
\end{align*}
Now using the equations for $\partial_tv_j$ and $\partial_t z_j=v_j$, $j=1,2$, the left-hand side of the above expression is equal to
\begin{align*}
&\sum_{j=1}^2 \bigg[\bigg(-\big[\sum_{k=1}^2 (v_k)(\partial_k v_j)\big]+\frac{\nu_{NS}}{\rho_{NS}}\Delta v_j-\rho_{NS}^{-1}(\partial_jq)-g\delta_{j,2}\bigg)(\partial_\alpha z_j)\\
&\hspace{0.6in}+(v_j)(\partial_1v_j)(\partial_\alpha z_1)+(v_j)(\partial_2v_j)(\partial_\alpha z_2)\bigg]\\
&\hspace{0.2in}-(\curl v)\Big((v_1)(\partial_\alpha z_2)-(v_2)(\partial_\alpha z_1)\Big)\\
&\hspace{0.4in}=\sum_{j=1}^2 \frac{\nu_{NS}}{\rho_{NS}}(\Delta v_j)(\partial_\alpha z_j)-\rho_{NS}^{-1}\partial_\alpha\widetilde{q}-g\partial_\alpha z_2,
\end{align*}
where a direct computation shows that the terms of the form $(v_i)(\partial_j v_k)(\partial_\alpha z_\ell)$ collectively sum to zero, and where we have used that, via the chain rule, $\partial_\alpha \widetilde{q}=(\partial_1q)(\partial_\alpha z_1)+(\partial_2q)(\partial_\alpha z_2)$.
\end{proof}

We now return to the proof of the proposition.

\begin{proof}[Proof of Proposition $\ref{lem0}$]
We proceed as in the proof of Lemma $2.2$ in \cite{FIL-NoSplash}.  Note that Proposition $\ref{prop1b}$ and Lemma $\ref{lem-vtilde}$ imply that $\omega_\Gamma$ satisfies 
\begin{align*}
\partial_t\omega_\Gamma&=\partial_t (\widetilde{u}\cdot\partial_\alpha z)-\partial_t(\widetilde{v}\cdot\partial_\alpha z)\\
&=\partial_t \Big[(\widetilde{u}-\frac{\rho_{NS}}{\rho_E}\widetilde{v})\cdot\partial_\alpha z\Big]-\Big(\frac{\rho_E-\rho_{NS}}{\rho_E}\Big)\partial_t(\widetilde{v}\cdot\partial_\alpha z)\\
&=\partial_\alpha(\widetilde{u}\cdot \partial_t z)-\frac{1}{2}\partial_{\alpha}[|\widetilde{u}|^2]-\frac{1}{\rho_E}\partial_{\alpha}\widetilde{p}-g\partial_\alpha z_2\\
&\hspace{0.2in}-\frac{\rho_{NS}}{\rho_E}\Big[\partial_\alpha(\widetilde{v}\cdot\partial_t z)-\frac{1}{2}\partial_{\alpha}[|\widetilde{v}|^2]-\frac{1}{\rho_{NS}}\partial_{\alpha}\widetilde{q}-g\partial_\alpha z_2+\frac{\nu_{NS}}{\rho_{NS}}(\Delta v)\cdot\partial_\alpha z\Big]\\
&\hspace{0.2in}-\Big(\frac{\rho_E-\rho_{NS}}{\rho_E}\Big)\partial_t(\widetilde{v}\cdot\partial_\alpha z)\\
&= -\frac{1}{2}\partial_\alpha\Big[\frac{\omega_\Gamma^2}{|\partial_\alpha z|^2}\Big]+\partial_\alpha F+G-\frac{\nu_{NS}}{\rho_E}(\Delta v)\cdot \partial_\alpha z
\end{align*}
where
\begin{align*}
F:=\bigg(\frac{\partial_\alpha z\cdot \partial_t z}{|\partial_\alpha z|^2}-\frac{\widetilde{v}\cdot\partial_\alpha z}{|\partial_\alpha z|^2}\bigg)\omega_\Gamma
\end{align*}
and
\begin{align*}
G&:=\Big(1-\frac{\rho_{NS}}{\rho_E}\Big)\partial_\alpha(\widetilde{v}\cdot \partial_t z)+\Big(\frac{\rho_{NS}-\rho_E}{2\rho_E}\Big)\partial_{\alpha}[|\widetilde{v}|^2]-\frac{1}{\rho_E}\partial_\alpha(\widetilde{q}-\widetilde{p})\\
&\hspace{0.2in}-\Big(\frac{\rho_E-\rho_{NS}}{\rho_E}\Big)\Big(\partial_t(\widetilde{v}\cdot\partial_\alpha z)+g\partial_\alpha z_2\Big)
\end{align*}
and where we have used $$\widetilde{u}\cdot\partial_t z=\widetilde{v}\cdot \partial_t z+\omega_\Gamma\frac{\partial_\alpha z\cdot\partial_t z}{|\partial_\alpha z|^2},$$ as well as $(\widetilde{u}-\widetilde{v})\cdot (\partial_\alpha z)^\perp=0$ via the identity
\begin{align*}
-\frac{\omega_\Gamma^2}{|\partial_\alpha z|^2}+|\widetilde{u}|^2-|\widetilde{v}|^2=\frac{2\omega_\Gamma \widetilde{v}\cdot\partial_\alpha z}{|\partial_\alpha z|^2}.
\end{align*}

Now, fixing $p>1$ and setting
\begin{align*}
I(t):=\int_0^{2\pi} \frac{\omega_\Gamma(t,\alpha)^p}{|\partial_\alpha z|^{p-1}}d\alpha,
\end{align*}
we observe that, using the admissibility criteria
\begin{align*}
|I'(t)|&\lesssim \bigg|\int_0^{2\pi} \frac{p\omega_\Gamma^{p-1}}{|\partial_\alpha z|^{p-1}}\bigg[-\frac{1}{2}\partial_\alpha\bigg(\frac{\omega_\Gamma^2}{|\partial_\alpha z|^2}\bigg)\\
&\hspace{1.2in}+\partial_\alpha F+G\\
&\hspace{1.2in}-\frac{\nu_{NS}}{\rho_E}(\Delta v)(t,z(t,\alpha))\cdot \partial_\alpha z\bigg]d\alpha\bigg|\\
&\hspace{0.2in}+(p-1)\bigg|\int_0^{2\pi} \frac{\omega_\Gamma^p\partial_t|\partial_\alpha z|}{|\partial_\alpha z|^p}d\alpha\bigg|\\
&\lesssim p(I(t)+1)+B(t)
\end{align*}
for all $0\leq t\leq T$, with the implicit constants depending only on $A$ and $T$, with
\begin{align*}
B(t):=\bigg|\int_0^{2\pi} \frac{p\omega_\Gamma^{p-1}}{|\partial_\alpha z|^{p-1}}(-\Delta v)\cdot(\partial_\alpha z)d\alpha\bigg|.
\end{align*}

Using the admissibility hypothesis (\ref{eq-hypB}) followed by a Gronwall argument as in \cite{FIL-NoSplash}, we get the bound $I(t)\lesssim \exp(Cp(T+1))$ for $t\in [0,T]$.  The result follows by taking $p\rightarrow\infty$ (and again using the admissibility criteria).
\end{proof}

We now collect some estimates for the chord-arc functional $CA_T(z)$, where $(u,v,z,p,q)$ is 
an ($A$, $\textrm{E}_{\textrm{wild}}$, $\textrm{NS}_{\textrm{tame}}$)--admissible solution to the 
system ($\ref{eqE1}$)--($\ref{eqE5}$).  

\begin{lemma}
\label{lem1}
Fix $T>0$, $A>0$ and $\delta>0$.  Then there exist $\epsilon_1,\epsilon_2,\epsilon_3>0$ such that if $(u,v,z,p,q)$ is an ($A$, $\textrm{E}_{\textrm{wild}}$, $\textrm{NS}_{\textrm{tame}}$)--admissible solution to the system ($\ref{eqE1}$)--($\ref{eqE5}$), then 
\begin{enumerate}
\item[(i)] for $\alpha,\beta\in\mathbb{R}$, the condition $|\alpha-\beta|\leq \epsilon_1$ implies $F_z(t,\alpha,\beta)\geq \epsilon_1$, 
\item[(ii)] if $t\in [0,T]$ is such that $CA(t,z)\leq \epsilon_2$ then there exist $\alpha_1,\alpha_2\in\mathbb{R}$ such that if $z^{(1)}=z(t,\alpha_1)$ and $z^{(2)}=z(t,\alpha_2)$, then we have
\begin{align*}
2\epsilon_1\leq |\alpha_1-\alpha_2|\leq \frac{1}{2\epsilon_1},
\end{align*}
as well as $\{\lambda z^{(1)}+(1-\lambda)z^{(2)}:\lambda\in [0,1]\}\cap \Gamma_t=\emptyset$, 
\begin{align}
(z^{(1)}-z^{(2)})\cdot z_\alpha(t,\alpha_1)=0,\quad (z^{(1)}-z^{(2)})\cdot z_\alpha(t,\alpha_2)=0,\label{eqza1}
\end{align}
\begin{align*}
z_\alpha(t,\alpha_1)\cdot z_\alpha(t,\alpha_2)<0,
\end{align*}
and
\begin{align*}
|z^{(1)}-z^{(2)}|=\inf_{\{(\alpha,\beta):\epsilon_1\leq |\alpha-\beta|\leq \frac{1}{\epsilon_1}\}}|z(t,\alpha)-z(t,\beta)|\sim CA(t,z), 
\end{align*}
and
\item[(iii)] if $t\in [0,T]$ and $\alpha_1,\alpha_2\in \mathbb{R}$ are such that, for $z^{(1)}=z(t,\alpha_1)$ and $z^{(2)}=z(t,\alpha_2)$, we have $$\{\lambda z^{(1)}+(1-\lambda)z^{(2)}:\lambda\in [0,1]\}\subset \overline{\Omega_{NS}(t)},$$ then the condition $|\alpha_1-\alpha_2|\geq \delta$ implies $|z^{(1)}-z^{(2)}|\geq \epsilon_3$.
\end{enumerate}
\end{lemma}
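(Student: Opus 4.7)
The plan is to handle the three parts in order, relying on the uniform bounds (\ref{eq-assumption})--(\ref{eq-hypC}), the quasi-periodicity $z(t,\alpha+2\pi)=z(t,\alpha)+(2\pi,0)$, and the initial chord-arc assumption $CA(0,z)\geq 1/A$. For (i), I would write $F_z(t,\alpha,\beta)=\bigl|\int_0^1 z_\alpha(t,\beta+s(\alpha-\beta))\,ds\bigr|$ by the fundamental theorem of calculus; the $C^4$ bound (\ref{eq-assumption}) controls the oscillation of the integrand, and the pointwise lower bound $|z_\alpha|\geq 1/A$ from (\ref{eq-hypC}) then forces $F_z\geq 1/(2A)$ once $|\alpha-\beta|\leq \epsilon_1$ for some $\epsilon_1$ depending only on $A$.

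For (ii), the first step is to localize the near-infimum of $F_z$ to a bounded parameter strip. Part (i) kills $|\alpha-\beta|<\epsilon_1$, and the quasi-periodicity combined with (\ref{eq-assumption}) yields $|z(t,\alpha)-z(t,\beta)|\geq |\alpha-\beta|-2A$, and hence $F_z\geq 1/2$ for $|\alpha-\beta|\geq 4A$. Taking $\epsilon_1<1/(4A)$ and $\epsilon_2<1/2$, the infimum must be attained in $\epsilon_1\leq |\alpha-\beta|\leq 1/\epsilon_1$, a set compact modulo the $2\pi$-periodicity in one argument; minimizing the Euclidean chord $|z(t,\alpha)-z(t,\beta)|$ over this compact region produces a pair $(\alpha_1,\alpha_2)$, with the minimizer strictly in the interior provided $CA(t,z)$ is small enough, and first-order optimality yields the orthogonality (\ref{eqza1}). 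The comparability $|z^{(1)}-z^{(2)}|\sim CA(t,z)$ follows since $|z(\alpha)-z(\beta)|=|\alpha-\beta|F_z(\alpha,\beta)$ with $|\alpha_1-\alpha_2|$ pinched in a bounded interval. Segment disjointness reduces to excluding a transverse intersection $z(t,\alpha_3)$ of the open segment: if $\alpha_3$ is at parametric distance at least $\epsilon_1$ from each of $\alpha_1,\alpha_2$, minimality is violated directly, while a near-endpoint case is ruled out by a local Taylor expansion combined with the orthogonality, which shows that the curve leaves the segment's supporting line along a perpendicular direction and only returns at parametric distance bounded below by a constant depending on $A$, eventually falling outside the segment for $|z^{(1)}-z^{(2)}|$ small. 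Finally, the antiparallel condition $z_\alpha(\alpha_1)\cdot z_\alpha(\alpha_2)<0$ is extracted from the orthogonality, together with the fact that the segment lies in exactly one of $\Omega_E(t),\Omega_{NS}(t)$: the convention $\mathbf{n}=-(z_\alpha)^\perp$ forces the normals at $\alpha_1$ and $\alpha_2$ to point in opposite directions along $d:=z^{(1)}-z^{(2)}$ (since the segment is on the same side of $\Gamma_t$ at both endpoints), with strictness furnished by the lower bound (\ref{eq-hypC}).

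For (iii), I would use (\ref{eq-hypB}) and Gr\"onwall to obtain a uniform bilipschitz Lagrangian flow $\Phi_t:\overline{\Omega_{NS}(0)}\to \overline{\Omega_{NS}(t)}$ defined by $\dot\Phi_t(x_0)=v(t,\Phi_t(x_0))$, with constant $L=L(A,T)$ that extends to a bilipschitz comparison of inner distances. The segment hypothesis $[z^{(1)},z^{(2)}]\subset\overline{\Omega_{NS}(t)}$ identifies the time-$t$ Euclidean chord with the inner distance between $z^{(1)}$ and $z^{(2)}$; pulling back via the flow, using $d_{\overline{\Omega_{NS}(0)}}\geq |\cdot|_{\mathrm{Euc}}$ together with $CA(0,z)\geq 1/A$ and $|\alpha_1-\alpha_2|\geq \delta$, I conclude $|z^{(1)}-z^{(2)}|\geq L^{-1}\delta/A=:\epsilon_3$.

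The main obstacle lies in (ii), specifically in establishing the antiparallel tangent condition and segment-disjointness simultaneously with the orthogonality: the former requires careful reconciliation of the normal convention $\mathbf{n}=-(z_\alpha)^\perp$ with the side of $\Gamma_t$ occupied by the segment, while the latter demands a local second-order argument to rule out near-endpoint crossings that are not directly forbidden by the minimality. Part (iii) is comparatively routine once one observes that the segment condition is precisely what identifies Euclidean and inner distances in $\overline{\Omega_{NS}(t)}$, allowing the bilipschitz flow estimate to be applied.
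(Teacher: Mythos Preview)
Your proposal is correct and follows essentially the same approach as the paper: parts (i)--(ii) are deferred there to \cite[Lemma~2.3]{FIL-NoSplash}, matching your sketch, and for (iii) both arguments use the Lipschitz bound on $v$ from (\ref{eq-hypB}), the boundary condition $\partial_t z=v$, and Gr\"onwall to transport the chord back to $t=0$ and contradict $CA(0,z)\geq 1/A$. Your bilipschitz-flow/inner-distance formulation of (iii) is a mild repackaging of the paper's direct estimate on $|z(s,\alpha_1)-z(s,\alpha_2)|$, with the minor advantage that it does not require the straight segment between the two boundary points to remain inside $\overline{\Omega_{NS}(\tau)}$ at intermediate times $\tau$.
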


\begin{proof}
The statements (i) and (ii) 
follow as in Lemma 2.3 in \cite{FIL-NoSplash}, and the proofs given there work equally well in our setting.  

Suppose for contradiction that the claim (iii) failed.  Then there is $\delta>0$ so that for any $\epsilon_3>0$ there exist an admissible solution and $t_0$, $\alpha_1$, $\alpha_2$ with $\{\lambda z^{(1)}+(1-\lambda)z^{(2)}:\lambda\in [0,1]\}\subset \overline{\Omega_{NS}(t_0)}$, and such that $|\alpha_1-\alpha_2|\geq \delta$ and $|z^{(1)}-z^{(2)}|\leq \epsilon_3$, where $z^{(i)}=z(t_0,\alpha_i)$, $i=1,2$.

By the equation for $\partial_t z(t,\alpha)$, we have
\begin{align*}
\partial_t z(t,\alpha)=v(t,z(t,\alpha))
\end{align*}

Fix $s,t\in [0,t_0]$ with $s<t$.  The regularity assumption on $v$ gives, with $z^{(i)}=z(t_0,\alpha_i)$, $i=1,2$,
\begin{align*}
&|z(s,\alpha_1)-z(s,\alpha_2)|\\
&\hspace{0.2in}\leq |z(t,\alpha_1)-z(t,\alpha_2)|+\int_s^{t} |\partial_t[z(t',\alpha_1)-z(t',\alpha_2)]|dt'\\
&\hspace{0.2in}=|z(t,\alpha_1)-z(t,\alpha_2)|+\int_s^{t} |v(t',z(t',\alpha_1))-v(t',z(t',\alpha_2))|dt'\\
&\hspace{0.2in}\leq |z(t,\alpha_1)-z(t,\alpha_2)|+\int_s^{t} \bigg(\sup_{w}|(\nabla_w v)(t',w)|\bigg)|z(t',\alpha_1)-z(t',\alpha_2)|dt\\
&\hspace{0.2in}\leq |z(t,\alpha_1)-z(t,\alpha_2)|+A\int_s^{t} |z(t',\alpha_1)-z(t',\alpha_2)|dt',
\end{align*}
provided that $\{\lambda z(\tau,\alpha_1)+(1-\lambda)z(\tau,\alpha_2):\lambda\in [0,1]\}\subset \overline{\Omega_{NS}(\tau)}$ for all $\tau\in [s,t]$ (indeed, the fact that this is satisfied at time $t_0$ implies that this condition is preserved for all $t\in [0,T]$).

As a consequence, setting $\zeta(s)=\sup_{s'\in [s,t_0]} |z(s',\alpha_1)-z(s',\alpha_2)|$ for $s\in [0,t_0]$, we have
\begin{align*}
\zeta(s)&\leq \zeta(t)+(t-s)A\zeta(s), \quad 0\leq s<t\leq t_0,\\
\zeta(t_0)&\leq \epsilon_3.
\end{align*}
A continuity argument now shows $\zeta(s)\lesssim \epsilon_3$ for all $s\in [0,t_0]$ (in particular, the set of $s\in [0,t_0]$ such that $\zeta(s)\leq 2\epsilon_3$ is both open and closed in $[0,t_0]$), and thus
\begin{align*}
|z(0,\alpha_1)-z(0,\alpha_2)|\lesssim \epsilon_3.
\end{align*}

Recalling also $|\alpha_1-\alpha_2|\geq \delta$, we therefore obtain $CA(0,z)\leq F_z(0,\alpha_1,\alpha_2)=\frac{|z(0,\alpha_1)-z(0,\alpha_2)|}{|\alpha_1-\alpha_2|}\lesssim \epsilon_3\delta^{-1}$.  Choosing $\epsilon_3$ sufficiently small gives a contradiction with the admissibility hypothesis $CA(0,z)\geq 1/A$.  This completes the proof of (iii).

\end{proof}

The remaining two lemmas of this section, which are analogues of \cite[Proposition $2.5$]{FIL-NoSplash}, further characterize the nature of a candidate splash point.  The first lemma asserts that under appropriate a priori assumptions, if $\alpha_1$ and $\alpha_2$ are chosen as in Lemma $\ref{lem1}$, then in a neighborhood of the segment connecting the candidate splash points on the boundary, the domain partitions into a region of the Euler fluid $\Omega_{E}(t)$ ``enclosed'' above and below by two portions of the Navier-Stokes region $\Omega_{NS}(t)$.

\begin{lemma}
\label{lem2}
Fix $T>0$ and $A>0$, and let $\epsilon_2$ be as given by the statement of Lemma \ref{lem1}.  Then there exists $\epsilon_4,\epsilon_5\in (0,\epsilon_2)$ such that if $(u,v,z,p,q)$ is an ($A$, $\textrm{E}_{\textrm{wild}}$, $\textrm{NS}_{\textrm{tame}}$)--admissible solution to the system ($\ref{eqE1}$)--($\ref{eqE5}$) with $CA(t,z)\leq \epsilon_4$ for some $t\in [0,T]$ then for $\alpha_1$ and $\alpha_2$ as in (iii) of Lemma $\ref{lem1}$, setting $z^{(1)}=z(t,\alpha_1)$ and $z^{(2)}=z(t,\alpha_2)$, and $$e:=\frac{z^{(2)}-z^{(1)}}{|z^{(2)}-z^{(1)}|},\quad d:=|z^{(2)}-z^{(1)}|,$$ we have $$\{\lambda z^{(1)}+(1-\lambda) z^{(2)}:\lambda\in [0,1]\}\subset \Omega_{E}(t),$$ and
\begin{align*}
\{z(t,\beta)+\rho e:|\alpha_1-\beta|\leq \epsilon_5,0<\rho<d\}\subset \Omega_{E}(t),\\
\{z(t,\beta)-\rho e:|\alpha_1-\beta|\leq \epsilon_5,0<\rho<\epsilon_5\}\subset \Omega_{NS}(t),
\end{align*}
along with
\begin{align*}
\{z(t,\beta)+\rho e:|\alpha_2-\beta|\leq \epsilon_5,0<\rho<\epsilon_5\}\subset \Omega_{NS}(t),\\
\{z(t,\beta)-\rho e:|\alpha_2-\beta|\leq \epsilon_5,0<\rho<d\}\subset \Omega_{E}(t).
\end{align*}
\end{lemma}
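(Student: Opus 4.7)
The plan is to exploit the local geometry that Lemma \ref{lem1} produces near a candidate splash pair: the tangents $z_\alpha(t, \alpha_i)$, $i=1,2$, are perpendicular to the chord direction $e$, point in opposite directions along $e^\perp$ (since $z_\alpha(t,\alpha_1) \cdot z_\alpha(t,\alpha_2) < 0$), and are uniformly bounded below in norm by the admissibility hypothesis (\ref{eq-hypC}). Combined with the $C^2$-control on $z$ from (\ref{eq-assumption}), this lets me parameterize $\Gamma_t$ locally near each $z^{(i)}$ as the graph of a $C^2$ function $h_i$ over the $e^\perp$-axis, with $h_i(0) = h_i'(0) = 0$ and $\lVert h_i''\rVert_\infty \le C(A)$. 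Once this graph representation is in hand, the lemma splits into two independent assertions: the open segment from $z^{(1)}$ to $z^{(2)}$ lies in $\Omega_E(t)$, and the four small rectangles in the statement are each disjoint from $\Gamma_t$.

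For the open segment, Lemma \ref{lem1}(ii) guarantees it avoids $\Gamma_t$, so by connectedness it lies entirely in $\Omega_E(t)$ or entirely in $\Omega_{NS}(t)$. To rule out the latter I invoke Lemma \ref{lem1}(iii) with $\delta = 2\epsilon_1 \le |\alpha_1 - \alpha_2|$: if the closed segment lay in $\overline{\Omega_{NS}(t)}$, Lemma \ref{lem1}(iii) would produce $|z^{(1)} - z^{(2)}| \ge \epsilon_3$ for some $\epsilon_3 = \epsilon_3(T,A,\epsilon_1) > 0$, contradicting $|z^{(1)} - z^{(2)}| \sim CA(t, z) \le \epsilon_4$ once $\epsilon_4$ is chosen small compared to $\epsilon_3$. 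Having placed the open segment in $\Omega_E(t)$, the sign of the inward normal at $\alpha_i$ is pinned down; together with $z_\alpha(t,\alpha_1) \cdot z_\alpha(t,\alpha_2) < 0$ this explains why the $+e$ side is $\Omega_E$ near $\alpha_1$ but $\Omega_{NS}$ near $\alpha_2$ (and vice versa for the $-e$ side), matching the four inclusions in the lemma.

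The main technical step is the quantitative extension to the rectangles. Fix the largest of them, $R = \{z(t,\beta) + \rho e : |\beta - \alpha_1| \le \epsilon_5,\ 0 < \rho < d\}$; once $R \cap \Gamma_t = \emptyset$ is verified, connectedness of $R$ together with the fact that the already-classified open segment touches $R$ forces $R \subset \Omega_E(t)$. Suppose for contradiction $z(t,\gamma) = z(t,\beta) + \rho e$ with $|\beta - \alpha_1| \le \epsilon_5$ and $0 < \rho < d$. I split on $|\gamma - \beta|$: in the regime $|\gamma - \beta| \in [\epsilon_1, 1/\epsilon_1]$, the chord-arc minimality in Lemma \ref{lem1}(ii) forces $|z(t,\gamma) - z(t,\beta)| \ge d$, contradicting $\rho < d$; in the regime $|\gamma - \beta| < \epsilon_1$, Lemma \ref{lem1}(i) combined with the local graph representation of $\Gamma_t$ (injectivity of the $e^\perp$-projection near $\alpha_1$ and $h_1'(0) = 0$) forces $\gamma = \beta$ and hence $\rho = 0$; in the regime $|\gamma - \beta| > 1/\epsilon_1$, periodicity of $z(t,\alpha) - (\alpha, 0)$ together with the $C^0$ bound in (\ref{eq-assumption}) yields a uniform lower bound on $|z(t,\gamma) - z(t,\beta)|$ exceeding $d$ once $\epsilon_4$ is taken small. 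The same case analysis, with swapped roles and signs, disposes of the other three rectangles. The subtlest point is that without the global infimum statement in Lemma \ref{lem1}(ii) one would be forced to compare the second-order Taylor coefficients $h_1''(0)$ and $h_2''(0)$ to exclude $\Gamma_t$ curling back into $R$ near the far endpoint $z^{(2)}$; the global minimality bypasses this sign-sensitive comparison entirely, and this is the main obstacle the argument has to navigate.
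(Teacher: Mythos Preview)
The paper does not actually prove Lemma~\ref{lem2}: it is stated without proof and introduced as an analogue of \cite[Proposition~2.5]{FIL-NoSplash}, so there is no argument in the paper to compare against beyond the implicit reference to the earlier work together with the new part~(iii) of Lemma~\ref{lem1}. Your proposal reconstructs exactly this: you adapt the \cite{FIL-NoSplash} local-graph argument and correctly isolate Lemma~\ref{lem1}(iii) as the ingredient that distinguishes the $\Omega_E$ side from the $\Omega_{NS}$ side (this is the one place where the Euler--Navier-Stokes asymmetry enters). In that sense your approach coincides with what the paper intends.

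There is one imprecision worth flagging. Your case analysis for $R=\{z(t,\beta)+\rho e:|\beta-\alpha_1|\le\epsilon_5,\ 0<\rho<d\}$ is fine, because in the ``middle'' regime $\epsilon_1\le|\gamma-\beta|\le 1/\epsilon_1$ the global minimality gives $\rho\ge d$, contradicting $\rho<d$. But you then assert that ``the same case analysis, with swapped roles and signs, disposes of the other three rectangles.'' For the two $\Omega_{NS}$-side strips the depth is $\epsilon_5$, not $d$, and since $d\lesssim\epsilon_4$ can be made arbitrarily small while $\epsilon_5$ is fixed, the inequality $\rho\ge d$ no longer contradicts $\rho<\epsilon_5$. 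So the middle-regime step of your trichotomy does not transfer verbatim. The fix is to invoke Lemma~\ref{lem1}(iii) a second time: if some $z(t,\gamma)$ lay in, say, $\{z(t,\beta)+\rho e:|\beta-\alpha_2|\le\epsilon_5,\ 0<\rho<\epsilon_5\}$, take the first point where the ray $z^{(2)}+\rho e$ meets $\Gamma_t$; the segment from $z^{(2)}$ to that point lies in $\overline{\Omega_{NS}(t)}$, the parameters differ by at least a constant $c(A)$ (since $\gamma$ lies outside the graph neighborhood of $\alpha_2$), and Lemma~\ref{lem1}(iii) then forces the distance to exceed a fixed $\epsilon_3$, contradicting $\rho<\epsilon_5$ once $\epsilon_5$ is chosen small. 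This is a genuine (if minor) extra step beyond ``swapped roles and signs.''
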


The second lemma gives a characterization of the candidate splash under a rigid-body transformation placing the splash in
a reference orientation.

\begin{lemma}
\label{lem3}
Fix $T>0$ and $A>0$, and let $\epsilon_2$ be as given by the statement of Lemma \ref{lem1}.  Then we can choose $\epsilon_4$ and $\epsilon_5$ sufficiently small so that the condition of Lemma \ref{lem2} is satisfied, and, moreover, there exists $C>0$ such that if $(u,v,z,p,q)$ is and ($A$, $\textrm{E}_{\textrm{wild}}$, $\textrm{NS}_{\textrm{tame}}$)--admissible solution to the system ($\ref{eqE1}$)--($\ref{eqE5}$) with $CA(t,z)\leq \epsilon_4$ for some $t\in [0,T]$ and $\alpha_1,\alpha_2,z^{(1)},z^{(2)},e,$ and $d$ are as in the statement of Lemma \ref{lem2}, then, letting $R:\mathbb{R}^2\rightarrow\mathbb{R}^2$ be the rigid transformation defined by $$R\left(-\frac{d}{2}e_2\right)=z^{(1)},\quad R(0)=\frac{z^{(1)}+z^{(2)}}{2},\quad R\left(\frac{d}{2}e_2\right)=z^{(2)},$$
there exist functions $$f_1,f_2,\beta_1,\beta_2:I_0\rightarrow I_1,\quad I_0=(-\epsilon_5^4,\epsilon_5^4),\quad I_1=(-\epsilon_5^2,\epsilon_5^2),$$ such that $f_i\in C^4(I_0)$ and $\beta_i\in C^4(I_0)$ with $\lVert f_i\rVert_{C^4}+\lVert \beta_i\rVert_{C^4}\leq C$ for $i=1,2$, and 
\begin{enumerate}
\item[(i)] $f_1(0)=-d/2$, $f_2(0)=d/2$, $f'_1(0)=f'_2(0)=0$, with $f_2(\rho)-f_1(\rho)\geq d$ for $\rho\in I_0$,
\item[(ii)] $\beta_1(0)=\beta_2(0)=0$, with $-\beta'_1(\rho)\beta'_2(\rho)\sim 1$ for $\rho\in I_0$, 
\item[(iii)] $R(\rho,f_i(\rho))=z(t,\alpha_i+\beta_i(\rho))$, for $\rho\in I_0$ and $i=1,2$, and
\item[(iv)] setting 
\begin{align*}
A_1:=\{(\rho,y):\rho\in I_0,y\in I_1\cap (-\infty,f_1(\rho))\},\\
A_2:=\{(\rho,y):\rho\in I_0,y\in I_1\cap (f_1(\rho),f_2(\rho))\},\\
A_3:=\{(\rho,y):\rho\in I_0,y\in I_1\cap (f_2(\rho),\infty)\},
\end{align*}
we have $$R(A_i)\subset \Omega_{NS}(t)\,\,\textrm{for}\,\,i\in\{1,3\}\,\,\,\textrm{and}\,\,\,R(A_2)\subset\Omega_{E}(t).$$
\end{enumerate}
\end{lemma}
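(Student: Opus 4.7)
The plan is to locally parametrize each of the two interface arcs near the candidate splash points as graphs over the horizontal axis in the rotated coordinate frame produced by $R$, and then verify the quantitative properties and the partition of a neighborhood into fluid regions.

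First I would fix notation: write $R(w) = Mw + c$, where $M$ is the rotation taking $e_2$ to $e = (z^{(2)}-z^{(1)})/d$ and $c = (z^{(1)}+z^{(2)})/2$. The key geometric fact, inherited from Lemma \ref{lem1}(ii) (equation (\ref{eqza1})), is the orthogonality $z_\alpha(t,\alpha_i)\cdot e = 0$, which means that in the rotated frame the tangent vectors $M^{-1}z_\alpha(t,\alpha_i)$ are parallel to $e_1$. Combined with the uniform lower bound $|z_\alpha|\geq 1/A$ from admissibility, this is exactly the transversality needed to solve for $\alpha$ as a function of the horizontal coordinate. Applying the implicit function theorem to the scalar map $\alpha \mapsto (M^{-1}(z(t,\alpha)-c))\cdot e_1$ at $\alpha = \alpha_i$, I would produce $C^4$ functions $\beta_i$ on an interval $I_0$ around $0$ satisfying $(M^{-1}(z(t,\alpha_i+\beta_i(\rho))-c))\cdot e_1 = \rho$. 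Setting $f_i(\rho) := (M^{-1}(z(t,\alpha_i+\beta_i(\rho))-c))\cdot e_2$ then yields the graph identity (iii). Evaluating at $\rho=0$ gives $\beta_i(0)=0$, $f_1(0)=-d/2$, $f_2(0)=d/2$; differentiating the defining identity at $\rho=0$ and using the orthogonality gives $f'_i(0) = 0$ and $\beta'_i(0) = \pm 1/|M^{-1}z_\alpha(t,\alpha_i)\cdot e_1|$, so that the product $-\beta'_1(0)\beta'_2(0)$ is $\sim 1$ by the admissibility bound $|z_\alpha|\sim 1$ and the opposite-sign condition $z_\alpha(t,\alpha_1)\cdot z_\alpha(t,\alpha_2)<0$ from Lemma \ref{lem1}(ii). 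The $C^4$ norm bounds on $f_i,\beta_i$ propagate from $\|z\|_{C^4}\leq A$ through the implicit function theorem, uniformly over admissible solutions, provided $\epsilon_5$ is chosen small enough.

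For the vertical separation $f_2(\rho)-f_1(\rho)\geq d$ in (i), I would observe that $R(\rho,f_1(\rho))$ and $R(\rho,f_2(\rho))$ are $z(t,\alpha_1+\beta_1(\rho))$ and $z(t,\alpha_2+\beta_2(\rho))$, whose parameter separation lies in $[\epsilon_1, 1/\epsilon_1]$ once $\epsilon_5$ is small compared to $\epsilon_1$ (using $|\alpha_1-\alpha_2|\in[2\epsilon_1, 1/(2\epsilon_1)]$ from Lemma \ref{lem1}(ii)). Then the infimum identity in Lemma \ref{lem1}(ii) bounds the Euclidean distance between these points from below by $|z^{(1)}-z^{(2)}|=d$; since $R$ is an isometry and the two preimages share their first coordinate, this gives $|f_2(\rho)-f_1(\rho)|\geq d$, and continuity with $f_2(0)-f_1(0)=d$ fixes the sign.

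For the partition (iv), I would translate the local fluid description of Lemma \ref{lem2} into the rotated frame. For each $\rho\in I_0$, $R(A_2)$ restricted to that slice is a vertical segment running from $z(t,\alpha_1+\beta_1(\rho))$ to $z(t,\alpha_2+\beta_2(\rho))$; by the $+e$-displacement statement of Lemma \ref{lem2} at the interface point $z(t,\alpha_1+\beta_1(\rho))$, the portion of this segment with $0<\rho'<d$ lies in $\Omega_E(t)$, and the matching $-e$ statement at $\alpha_2$ covers the complementary portion, giving $R(A_2)\subset\Omega_E(t)$. The regions $A_1$ and $A_3$ correspond to short $-e$ excursions of length $<\epsilon_5$ from $z(t,\alpha_1+\beta_1(\rho))$ and short $+e$ excursions from $z(t,\alpha_2+\beta_2(\rho))$ respectively, which Lemma \ref{lem2} places in $\Omega_{NS}(t)$ provided the ``height'' $\epsilon_5^2$ of $I_1$ is smaller than the excursion scale $\epsilon_5$ of Lemma \ref{lem2}, which it is. The main technical care needed throughout is not a deep new idea but the coordination of the small parameters $\epsilon_1,\epsilon_4,\epsilon_5$: one must shrink $\epsilon_5$ so that the implicit function theorem neighborhoods are uniform over admissible solutions, that $I_0\times I_1$ sits inside the neighborhood where Lemma \ref{lem2} applies, and that the parameter excursion $\beta_i(I_0)$ remains well below $|\alpha_1-\alpha_2|$. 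This bookkeeping is the only real obstacle.
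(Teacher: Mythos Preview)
The paper does not give its own proof of this lemma: it is introduced as an analogue of \cite[Proposition~2.5]{FIL-NoSplash}, and the text immediately afterward says ``Having recalled these statements, we are ready to move on\ldots''.  Your implicit-function-theorem argument (using the orthogonality $(z^{(1)}-z^{(2)})\cdot z_\alpha(t,\alpha_i)=0$ to make the interface arcs graphs over $e_1$ in the rotated frame, then reading off (i)--(iii) by differentiation and the minimality of $d$) is precisely the standard construction, and is what one finds in \cite{FIL-NoSplash}.

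One genuine gap in your write-up, though, concerns the $A_2$ inclusion in (iv).  You argue that the $+e$ reach of length $d$ from $z(t,\alpha_1+\beta_1(\rho))$ and the $-e$ reach of length $d$ from $z(t,\alpha_2+\beta_2(\rho))$ provided by Lemma~\ref{lem2} together cover the vertical segment between the two graphs.  But that segment has length $f_2(\rho)-f_1(\rho)=d+O(\rho^2)$ with implicit constant depending only on $A$, and since $d$ may be arbitrarily small relative to $\epsilon_5$ there is no reason the two reaches of length $d$ overlap (you would need $f_2(\rho)-f_1(\rho)<2d$, i.e.\ $d\gtrsim \epsilon_5^8$, which is not available).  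The correct argument is by connectedness: first check, using Lemma~\ref{lem1}(i) together with the minimality of $|z^{(1)}-z^{(2)}|$, that the only portion of $\Gamma_t$ inside $R(I_0\times I_1)$ consists of the two graphs $y=f_1(\rho)$ and $y=f_2(\rho)$; the open vertical segment between them therefore avoids $\Gamma_t$ entirely and lies in a single fluid component, which Lemma~\ref{lem2} then identifies as $\Omega_E(t)$ by examining any point near either endpoint.
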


Having recalled these statements, we are ready to move on to our next task, proving Theorem $\ref{thm1}$.  This is the content 
of the next section.

\section{Proof of the main result}

In this section, we give the proof of our main result, Theorem $\ref{thm1}$, which asserts that splash-type singularities cannot
form in finite time.

The argument is by contradiction, and is based on the a priori estimates established in the previous sections, combined with an analysis
of the singular integral appearing in the contribution of $B_{\Gamma}$ to the system of PDEs describing the evolution of the interface.  The
main point is to establish a differential inequality for the ``distance of closest approach'' in the splash.

\begin{proof}[Proof of Theorem \ref{thm1}]
Suppose that $T>0$, $A>0$ are such that for all $C_0>0$ there exists an ($A$, $\textrm{E}_{\textrm{wild}}$, $\textrm{NS}_{\textrm{tame}}$)--admissible solution $(u,v,z,p,q)$ to the system ($\ref{eqE1}$)--($\ref{eqE5}$) such that $$0<CA_T(z)<C_0.$$

Let $\epsilon_4$ be as in Lemma $\ref{lem3}$, and let $K>0$ be a fixed constant to be determined later in the argument.  By choosing $\epsilon_4$ smaller if necessary, we may suppose $\epsilon_4\leq 1/A$.  Moreover, by our assumption, we may choose an ($A$, $\textrm{E}_{\textrm{wild}}$, $\textrm{NS}_{\textrm{tame}}$)--admissible solution $(u,v,z,p,q)$ so that $$CA_T(z)\leq \frac{\epsilon_4}{K}.$$

Let $(u,v,z,p,q)$ be such a solution, and choose $\overline{t}\in [0,T]$ so that $CA(\overline{t};z)\leq \epsilon_4/K$.  By the admissibility condition, $CA(0,z)\geq 1/A$.  Thus, if $K$ is chosen sufficiently large, the set $$J:=\{t\geq 0:CA(t,z)\geq \epsilon_4/2\}$$ is nonempty with $t_0:=\sup J<\overline{t}$.

For $t\in [t_0,\overline{t}]$, we define 
\begin{align*}
D(t):=\sup\bigg\{\frac{1}{|z(t,\alpha)-z(t,\beta)|^2}:\alpha,\beta\in\mathbb{R},\epsilon_1<|\alpha-\beta|<\frac{1}{\epsilon_1}\bigg\}.
\end{align*}
We will show
\begin{align}
D(s)\leq D(r)+C\int_{r}^{s} D(t')\log D(t')dt',\,\, t_0\leq r\leq s\leq \overline{t}, \label{eq-dt}
\end{align}
for some universal constant $C>0$ (depending only on $T$ and $A$).  To show (\ref{eq-dt}), we note that a continuity argument reduces the issue to showing the claim that for each $t_2\in [0,T]$ there exists $\delta(t_2)>0$ such that 
\begin{align}
\nonumber D(t_2)\leq D(s)+C\int_{s}^{t_2} D(t')\log D(t')dt'\,\,\textrm{for}\,\, s\in I(t_2),\, \textrm{with}\\
I(t_2):=(t_2-\delta(t_2),t_2]\cap [t_0,\overline{t}].\label{eq-rs}
\end{align}

We now establish this claim.  

\vspace{0.1in}

\noindent {\bf Step 1}: Quantifying the evolution of a candidate splash point.

\vspace{0.1in}

Fix $t_2\in [t_0,\overline{t}]$ and let $\alpha_1$ and $\alpha_2$ be as in (iii) of Lemma $\ref{lem1}$ with $t=t_2$.  Moreover, set $d:=|z(t_2,\alpha_1)-z(t_2,\alpha_2)|$, and for all $t\in [0,t_2]$ set $$\widetilde{D}(t):=\frac{1}{|z(t,\alpha_1)-z(t,\alpha_2)|^2}.$$  We then have
\begin{align}
\nonumber\widetilde{D}'(t_2)&=-2\widetilde{D}(t_2)^2\Big[\Big(z(t_2,\alpha_1)-z(t_2,\alpha_2)\Big)\cdot\Big((\partial_t z)(t_2,\alpha_1)-(\partial_t z)(t_2,\alpha_2)\Big)\Big]\\
&=-2\widetilde{D}(t_2)^2\Big[\Big(z(t_2,\alpha_1)-z(t_2,\alpha_2)\Big)\cdot\Big(\widetilde{u}(t_2,\alpha_1)-\widetilde{u}(t_2,\alpha_2)\Big)\Big],\label{eq-dprime}
\end{align}
where we have used (\ref{eqE3}) and the conditions in (\ref{eqza1}) of Lemma \ref{lem1}.\footnote{The relevant computation is given by noting that
\begin{align*}
z_t=\frac{1}{|z_\alpha|^2}\Big((z_t\cdot z_\alpha)z_\alpha+(\widetilde{u}\cdot z_\alpha^\perp)z_\alpha^\perp\Big)=\frac{1}{|z_\alpha|^2}(z_t\cdot z_\alpha-\widetilde{u}\cdot z_\alpha)z_\alpha+\widetilde{u}
\end{align*}
and then observing that (\ref{eqza1}) implies
\begin{align*}
(z(t_2,\alpha_1)-z(t_2,\alpha_2))\cdot z_\alpha(t_2,\alpha_i)=0,\quad i=1,2.
\end{align*}}

On the other hand, examining the second factor on the right-hand side of (\ref{eq-dprime}), we get
\begin{align*}
&\Big(z(t_2,\alpha_1)-z(t_2,\alpha_2)\Big)\cdot\Big(\widetilde{u}(t_2,\alpha_1)-\widetilde{u}(t_2,\alpha_2)\Big)\\
&\hspace{0.2in}=\Big(R(-\frac{d}{2}e_2)-R(\frac{d}{2}e_2)\Big)\cdot \Big(\widetilde{u}(t_2,\alpha_1)-\widetilde{u}(t_2,\alpha_2)\Big)\\
&\hspace{0.2in}=\Big(Q(-\frac{d}{2}e_2)-Q(\frac{d}{2}e_2)\Big)\cdot \Big(\widetilde{u}(t_2,\alpha_1)-\widetilde{u}(t_2,\alpha_2)\Big)\\
&\hspace{0.2in}=d\Big[\Big(Q^{-1}(\widetilde{u}(t_2,\alpha_1))-Q^{-1}(\widetilde{u}(t_2,\alpha_2))\Big)\cdot e_2\Big]
\end{align*}
where we have set $Q(x)=R(x)-R(0)$ and used that $R$ a rigid body transformation implies that $Q$ is an orthogonal linear transformation.

Now, set $I_2=(-\epsilon_5^6,\epsilon_5^6)\subset\mathbb{R}$, and for $i\in \{1,2\}$ and $\rho\in I_2$, set
\begin{align*}
U_i(\rho):=Q^{-1}(\widetilde{u}(t_2,\alpha_i+\beta_i(\rho))),
\end{align*}
\begin{align*}
V_i(\rho):=Q^{-1}(B_{\textrm{NS}}(z,\omega_v;t_2,\alpha_i+\beta_i(\rho))),
\end{align*}
\begin{align*}
W_i(\rho):=Q^{-1}\Big(\frac{\omega_\Gamma\partial_\alpha z}{2|\partial_\alpha z|^2}\bigg|_{(t_2,\alpha_i+\beta_i(\rho))}+B_\Gamma(z,\omega_\Gamma;t_2,\alpha_i+\beta_i(\rho))\Big)
\end{align*}

Note that $U_i=V_i+W_i$ for $i=1,2$.  To estimate $(U_1(0)-U_2(0))\cdot e_2$, it therefore suffices to estimate $V_1(0)-V_2(0)$ and $W_1(0)-W_2(0)$.

\vspace{0.1in}

\noindent {\bf Step 2}: {\it Bounds for $|(V_1(0)-V_2(0))\cdot e_2|$ and $|(W_1(0)-W_2(0))\cdot e_2|$.}

\vspace{0.1in}

We claim that there exists a constant $C$ depending only on $T$, $A$ and the a priori bounds for $\omega_\Gamma$ and $\omega_v$ given by Proposition \ref{lem0} such that we have the bounds
\begin{enumerate}
\item[(i)] $|(W_1(0)-W_2(0))\cdot e_2|\leq Cd|\log(d)|$, and
\item[(ii)] $|(V_1(0)-V_2(0))\cdot e_2|\leq Cd|\log(d)|$.
\end{enumerate}

The bound (i) follows from singular integral estimates obtained in \cite{FIL-NoSplash}; for the sake of completeness, we give an outline of the argument in the appendix.  To establish the bound (ii), we note that
\begin{align*}
&Q(V_1(0)-V_2(0))\\
&\hspace{0.2in}=\int_{\Omega_{NS}(t_2)} \bigg(\frac{(z(t_2,\alpha_1)-y)^\perp}{|z(t_2,\alpha_1)-y|^2}-\frac{(z(t_2,\alpha_2)-y)^\perp}{|z(t_2,\alpha_2)-y|^2}\bigg)\omega_v(t_2,y)dy\\
&\hspace{0.2in}=\int_{\Omega_{NS}(t_2)}\bigg(\frac{(R(0,f_1(0))-y)^\perp}{|R(0,f_1(0))-y|^2}-\frac{(R(0,f_2(0))-y)^\perp}{|R(0,f_2(0))-y|^2}\bigg)\omega_v(t_2,y)dy
\end{align*}
is equal to, after a change of variables $y\mapsto R(y)$, the constant $\det(Q)$ multiplied by
\begin{align*}
&\int_{R^{-1}(\Omega_{NS}(t_2))}\bigg(\frac{(R(0,f_1(0))-R(y))^\perp}{|R(0,f_1(0))-R(y)|^2}-\frac{(R(0,f_2(0))-R(y))^\perp}{|R(0,f_2(0))-R(y)|^2}\bigg)\omega_v(t_2,R(y))dy\\
&=\int_{R^{-1}(\Omega_{NS}(t_2))}\bigg(\frac{(Q(0,f_1(0))-Q(y))^\perp}{|y_1|^2+|f_1(0)-y_2|^2}-\frac{(Q(0,f_2(0))-Q(y))^\perp}{|y_1|^2+|f_2(0)-y_2|^2}\bigg)\omega_v(t_2,R(y))dy.
\end{align*}
It follows that we have the representation
\begin{align*}
&V_1(0)-V_2(0)=\det(Q)^2\int_{R^{-1}(\Omega_{NS}(t_2))} \bigg(\frac{(y_2-f_1(0),y_1)}{|y_1|^2+|f_1(0)-y_2|^2}\\
&\hspace{2.3in}-\frac{(y_2-f_2(0),y_1)}{|y_1|^2+|f_2(0)-y_2|^2}\bigg)\omega_v(t_2,R(y))dy,
\end{align*}
where we have used that $Q$ a $2\times 2$ orthogonal matrix implies $Q^{-1}[(Qy)^\perp]=\det(Q)y^\perp$ for all $y\in\mathbb{R}^2$.

This in turn gives
\begin{align*}
&|(V_1(0)-V_2(0))\cdot e_2|\\
&\hspace{0.2in}=\bigg|\int_{R^{-1}(\Omega_{NS}(t_2))} \frac{y_1((f_2(0)-y_2)^2-(f_1(0)-y_2)^2)}{(y_1^2+(f_1(0)-y_2)^2)(y_1^2+(f_2(0)-y_2)^2)}\omega_v(t_2,R(y))dy\bigg|\\
&\hspace{0.2in}=\bigg|\int_{R^{-1}(\Omega_{NS}(t_2))} \frac{2dy_1y_2\omega_v(t_2,R(y))}{(y_1^2+(d/2+y_2)^2)(y_1^2+(d/2-y_2)^2)}dy\bigg|.
\end{align*}
Set $U:=R^{-1}(\Omega_{NS}(t_2))$ and
\begin{align*}
h_d(z_1,z_2):=\frac{2dz_1z_2\omega_v(t_2,R(z_1,z_2))}{(z_1^2+(\frac{d}{2}-z_2)^2)(z_1^2+(\frac{d}{2}+z_2)^2)},\quad z=(z_1,z_2)\in\mathbb{R}^2.
\end{align*}

We break the integral into the sum of several contributions.  Fix $c>0$ and consider 
\begin{align*}
(I)_{\textrm{far}}:=\int_{U_{\textrm{far}}} h_d(z)dz,\quad (I)_{\textrm{near}}:=\int_{U_{\textrm{near}}} h_d(z)dz,
\end{align*}
where
\begin{align*}
U_{\textrm{far}}&:=U\cap\{z:|z-(0,d/2)|>cd,|z-(0,-d/2)|>cd\},\\
U_{\textrm{near}}&:=U\cap (\{z:|z-(0,d/2)|<cd\}\cup\{z:|z-(0,-d/2)|<cd\}).
\end{align*}

We first estimate $(I)_{\textrm{far}}$.  For this, set $\widetilde{z}:=z-(0,d/2)$, and note that
\begin{align*}
|(I)_{\textrm{far}}|&\lesssim d\int_{U_{\textrm{far}}} \frac{z_1^2|\omega_v|}{|\widetilde{z}|^2|z+(0,d/2)|^2}+\frac{z_2^2|\omega_v|}{|\widetilde{z}|^2|z+(0,d/2)|^2}dz\\
&\lesssim d\bigg(\int_{cd<|\widetilde{z}|<1} \frac{\lVert \omega_v\rVert_{L^\infty}}{|\widetilde{z}|^2}dz+\lVert\omega_v\rVert_{L^2}\bigg(\int_{|\widetilde{z}|\geq 1} |\tilde{z}|^{-4}dz\bigg)^{1/2}\bigg)\\
&\hspace{0.2in}+d\int_{U_{\textrm{far}}} \frac{|(z_2-d/2)(z_2+d/2)+d^2/4|\,|\omega_v|}{|\widetilde{z}|^2|z+(0,d/2)|^2}dz\\
&\lesssim d|\log(cd)|+d\int_{U_{\textrm{far}}} \frac{|\omega_v|}{|\widetilde{z}||\widetilde{z}+(0,d)|}dz
+d\int_{U_{\textrm{far}}} \frac{|\omega_v|}{|\widetilde{z}+(0,d)|^2}dz
\end{align*}
Now,
\begin{align*}
d\int_{U_{\textrm{far}}} \frac{|\omega_v|}{|\widetilde{z}||\widetilde{z}+(0,d)|}dz&\leq d\int_{U_{\textrm{far}}} \frac{|\omega_v|}{|\widetilde{z}|^2}dz+d\int_{U_{\textrm{far}}} \frac{|\omega_v|}{|\widetilde{z}+(0,d)|^2}dz\\
&\lesssim d|\log(cd)|
\end{align*}
while, similarly,
\begin{align*}
d\int_{U_{\textrm{far}}} \frac{|\omega_v|}{|\widetilde{z}+(0,d)|^2}dz\lesssim d|\log(cd)|.
\end{align*}
We therefore have $(I)_{\textrm{far}}\lesssim d|\log(cd)|$.

Turning to the estimate of $(I)_{\textrm{near}}$, fix $c=1/4$.  Then
\begin{align}
\nonumber |(I)_{\textrm{near}}|&\leq \bigg(\int_{R^{-1}(\Omega_{NS}(t_2))\cap\{z:|z-(0,d/2)|<d/4\}} |h_d| dz\\
&\hspace{0.4in}+\int_{R^{-1}(\Omega_{NS}(t_2))\cap\{z:|z-(0,-d/2)|<d/4\}} |h_d| dz\bigg)\label{eq2p}
\end{align}

To estimate the first term on the right-hand side of (\ref{eq2p}), note that
\begin{align*}
\int_{R^{-1}(\Omega_{NS}(t_2))\cap\{z:|z-(0,d/2)|<d/4\}} |h_d|dz&\lesssim d\int_{\{z:|z|<d/4\}} \frac{|z_1(z_2+(d/2))|}{|z|^2|z+(0,d)|^2}dz,
\end{align*}
where we have invoked the change of variables $z\mapsto z+(0,d/2)$, used the a priori bound on $\omega_v$ given by Proposition $\ref{lem0}$, and expanded the region of integration.

We now bound the right-hand side of this expression, via
\begin{align*}
&d\int_{|z|<d/4} \frac{|z_1|(|z_2|+d/2)}{|z|^2(z_1^2+(z_2+d)^2)}dz\\
&\hspace{1.2in}\leq d\int_{|z|<d/4} \frac{1}{z_1^2+(z_2+d)^2}+\frac{(d/2)}{|z|\,(z_1^2+(z_2+d)^2)}dz\\
&\hspace{1.2in}\lesssim d\int_{|z-(0,d)|<d/4} \frac{1}{|z|^2}dz+d^2\int_{|z|<d/4} \frac{dz}{|z|\,(z_1^2+(z_2+d)^2)}.
\end{align*}
The desired estimate now follows from the bounds
\begin{align*}
d\int_{|z-(0,d)|<d/4} \frac{1}{|z|^2}dz&\lesssim d\int_{3d/4<r<5d/4}\int_{0}^\pi r^{-1}drd\theta\lesssim d
\end{align*}
and
\begin{align*}
d^2\int_{|z|<d/4}\frac{dz}{|z|\,(z_1^2+(z_2+d)^2)}&\lesssim \int_{|z|<d/4}\frac{1}{|z|}dz\lesssim \int_0^{d/4} dr\lesssim d,
\end{align*}
where we have used that $|z|<-d/4$ implies $|z_1^2+(z_2+d)^2|\geq (z_2+d)^2\gtrsim d^2$.

Combining the above estimates with analogous bounds for the second term in (\ref{eq2p}), obtained from estimating 
\begin{align*}
d\int_{\{z:|z|<d/4\}} \frac{z_1(z_2-(d/2))}{|z-(0,d)|^2|z|^2}dz
\end{align*}
in an identical manner as above, one obtains $|(I)_{\textrm{near}}|\lesssim d$.  This in turn gives
$$|(I)|\leq |(I)_{\textrm{far}}|+|(I)_{\textrm{near}}|\lesssim d|\log(d)|\quad\textrm{for}\quad 0<d\leq 1,$$
which establishes the desired bound (ii).

\vspace{0.1in}

\noindent {\bf Step 3}: {\it Conclusion of the argument.}

\vspace{0.1in}

Combining the claims (i) and (ii) of Step 2, we get $$|(U_1(0)-U_2(0))\cdot e_2|\leq Cd|\log(d)|.$$  In view of ($\ref{eq-dprime}$) we have shown
\begin{align*}
|\widetilde{D}'(t_2)|\leq C\widetilde{D}(t_2)\log(\widetilde{D}(t_2)).
\end{align*}
Combining this estimate with the assumption $CA_T(z)>0$, we obtain that there exists $\delta(t_2)>0$ such that for $I(t_2)$ as in (\ref{eq-rs}) one has $|\widetilde{D}'(t)|\leq C\widetilde{D}(t)\log(\widetilde{D}(t))$ for all $t\in I(t_2)$.  Integrating both sides of this bound and recalling that by definition one has $\widetilde{D}(s)\leq D(s)$ for $s\in I(t_2)$ then leads to
\begin{align*}
D(t_2)=\widetilde{D}(t_2)\leq \widetilde{D}(s)+C\int_s^{t_2} \widetilde{D}(t')\log\widetilde{D}(t')dt'\leq D(s)+C\int_s^{t_2} D(t')\log D(t')dt'
\end{align*}
for $s\in I(t_2)$.  This establishes ($\ref{eq-rs}$).

Recalling the continuity argument referenced earlier, the estimate ($\ref{eq-dt}$) holds as well.  To conclude the proof of Theorem $\ref{thm1}$, we note that (\ref{eq-dt}) implies $D'(t)\lesssim D(t)\log D(t)$ for $t\in [t_0,\overline{t}]$, and thus
\begin{align*}
D(\overline{t})\leq D(t_0)\exp(\exp(C(\overline{t}-t_0)))\lesssim C(T,A)\epsilon_4^{-2}
\end{align*}
which gives
\begin{align*}
CA(\overline{t},z)\geq c_0(T,A)\epsilon_4
\end{align*}
for a suitable constant $c_0(T,A)>0$.   This contradicts our original choice of $\overline{t}$, provided that $K$ is chosen sufficiently large (depending on $c_0(T,A)$, and thus only on $T$ and $A$).
\end{proof}

\appendix

\section{Singular integral estimates on the Birkhoff-Rott operator $B_\Gamma(z,\omega_\Gamma;t,\alpha)$}

In this appendix we sketch a proof of the claim (i) in Step 2 of the proof of Theorem $\ref{thm1}$.  As we described earlier, the argument is closely related
to estimates obtained in \cite{FIL-NoSplash} for the Birkhoff-Rott operator.  We outline the details for convenience of the reader.

For $\mu\in\mathbb{R}$, set $G(\mu):=(W_1(\mu)-W_2(\mu))\cdot e_2$.  We want to show $|G(0)|\lesssim d|\log(d)|$.  

The first step in the analysis is fix $\epsilon=d/2$, and to observe that the admissibility assumptions imply $|G(\alpha)-G(0)|\lesssim |\alpha|$ for $|\alpha|\lesssim\epsilon$. 

Set $f=-f_1-\epsilon$, $g=f_2-\epsilon$, and let $M$ be the operator defined by
\begin{align*}
(M\omega)(\alpha)&:=(2\pi)^{-1}\int_{\mathbb{R}} \frac{\omega(\beta)(2\epsilon+f(\alpha)+g(\alpha))}{(\alpha-\beta)^2+(2\epsilon+f(\alpha)+g(\alpha))^2}\psi_{\sqrt{\epsilon}/2^6}(\alpha)\psi_{\sqrt{\epsilon}/2^6}(\alpha-\beta)d\beta.
\end{align*}
We will use $M^*$ to denote the adjoint of $M$ with respect to the $L^2$ inner product.  

thus
\begin{align*}
|G(0)|&\lesssim |G(0)|\bigg|\int_{\mathbb{R}} (1+4(M^*)^2)\phi_0d\alpha\bigg|\\
&=\bigg|\int G(0)(1+4(M^*)^2)\phi_0d\alpha\bigg|\\
&\lesssim \bigg| \int G(\alpha)(1+4(M^*)^2)\phi_0d\alpha\bigg|+\int |\alpha|\, |(1+4(M^*)^2)\phi_0|d\alpha,
\end{align*}
where $\phi_0$ is a smooth function compactly supported in the interval $(-\sqrt{\epsilon},\sqrt{\epsilon})$.  With this choice of $\phi_0$, the second term is bounded by a multiple of $d|\log(d)|$, and it suffices to estimate
\begin{align*}
\bigg| \int G\cdot (1+4(M^*)^2)\phi_0d\alpha\bigg|
\end{align*} 

Next, by the arguments described in Section $2.4$ of \cite{FIL-NoSplash}, one has
\begin{align*}
|G(\mu)|&=\bigg|(A-B)(\mu)+(C-D)(\mu)+E(\mu)\bigg| 
\end{align*}
with
\begin{align*}
A(\mu)&:=(2\pi)^{-1}\int_{\mathbb{R}}\frac{\mu-\beta}{(\mu-\beta)^2+(f_1(\mu)-f_2(\beta))^2}\omega_2(\beta)\psi_{\sqrt{\epsilon}}(\mu-\beta)d\beta,\\
B(\mu)&:=(2\pi)^{-1}PV\int_{\mathbb{R}}\frac{\mu-\beta}{(\mu-\beta)^2+(f_2(\mu)-f_2(\beta))^2}\omega_2(\beta)\psi_{\sqrt{\epsilon}}(\mu-\beta)d\beta,\\
C(\mu)&:=-(2\pi)^{-1}\int_{\mathbb{R}}\frac{\mu-\beta}{(\mu-\beta)^2+(f_2(\mu)-f_1(\beta))^2}\omega_1(\beta)\psi_{\sqrt{\epsilon}}(\mu-\beta)d\beta,\\
D(\mu)&:=-(2\pi)^{-1}PV\int_{\mathbb{R}}\frac{\mu-\beta}{(\mu-\beta)^2+(f_1(\mu)-f_1(\beta))^2}\omega_1(\beta)\psi_{\sqrt{\epsilon}}(\mu-\beta)d\beta,\\
E(\mu)&:=\frac{\omega_1(\mu)f'_1(\mu)}{2(1+|f'_1(\mu)|^2)}-\frac{\omega_2(\mu)f'_2(\mu)}{2(1+|f'_2(\mu)|^2)}\\
&\hspace{0.2in}+(Q^{-1}[S_t^\infty(z(\alpha_1+\beta_1(\mu)))-S_t^\infty(z(\alpha_2+\beta_2(\mu)))])\cdot e_2\\
&\hspace{0.2in}+\sum_{m=1}^2 (2\pi)^{-1}PV\int_{\mathbb{R}}\Big(\frac{\mu-\rho}{|\mu-\rho|^2+|f_1(\mu)-f_m(\rho)|^2}\\
&\hspace{0.4in}-\frac{\mu-\rho}{|\mu-\rho|^2+|f_2(\mu)-f_m(\rho)|^2}\Big)\omega_m(\rho)(\psi_{\epsilon_5^{10}}(\beta_m(\rho))-\psi_{\sqrt{\epsilon}}(\mu-\rho))d\rho.
\end{align*}
(In the notation of \cite{FIL-NoSplash}, $A=T_{4,(g,f)}\omega_\Gamma^+(\mu)$, $B=T_{2,f}\omega_\Gamma^+(\mu)$, 
$C=T_{4,(f,g)}\omega_\Gamma^-(\mu)$, and $D=T_{2,g}\omega_\Gamma^-(\mu)$.)

This gives the bound
\begin{align*}
&\bigg|\int G(1+4(M^*)^2)\phi_0d\alpha\bigg|\\
&\hspace{0.2in}\lesssim \bigg|\int \Big((A-B)(\alpha)+(C-D)(\alpha)\Big)\Big((1+4(M^*)^2)\phi_0\Big)(\alpha)d\alpha\bigg|\\
&\hspace{0.4in}+\bigg|\int E(\alpha)\Big((1+4(M^*)^2)\phi_0\Big)(\alpha)d\alpha\bigg|.
\end{align*}
By \cite[Lemma 3.2(ii)]{FIL-NoSplash} and the admissibility assumptions, the second term is bounded by a multiple of $\epsilon|\log(\epsilon)|\lesssim d|\log(d)|$.  On the other hand, the first term is bounded by a multiple of
\begin{align*}
\bigg|\int (H\omega_2-H\omega_1)(\alpha)\Big((1+4(M^*)^2)\phi_0\Big)(\alpha)d\alpha\bigg|.
\end{align*}
where $H$ is the operator defined by 
\begin{align*}
(H\omega)(\alpha)&:=(2\pi)^{-1}\int_{\mathbb{R}} \bigg(\frac{\omega(\beta)(2\epsilon+f(\alpha)+g(\alpha))^2}{(\alpha-\beta)((\alpha-\beta)^2+(2\epsilon+f(\alpha)+g(\alpha))^2)}\\
&\hspace{2.2in}\cdot \psi_{\sqrt{\epsilon}/2^6}(\alpha)\psi_{\sqrt{\epsilon}/2^6}(\alpha-\beta)\bigg)d\beta.
\end{align*}

The desired estimate on this expression now follows by combining \cite[Lemmas 3.2, 3.3 3.5]{FIL-NoSplash} as in the proof of Proposition 3.4 in \cite{FIL-NoSplash} with a priori bounds for the first two terms of the expression $E(\mu)$ obtained as in \cite[(4.5)]{FIL-NoSplash}.

\section{Interfacial regularity estimates for the Navier-Stokes velocity}

In this appendix, we give an estimate for first and second derivatives of the velocity $v$ of the Navier-Stokes fluid, when evaluated at the free interface $\Gamma(t)$, even in the absence of the tameness assumption ($\ref{eq-hypB}$) on $v$.  For this, we introduce a notion of {\it weakly} admissible solution.  This is a variant of the admissibility condition of Definition \ref{def-ad} in which we retain the assumptions ($\ref{eq-assumption}$) and ($\ref{eq-hypC}$) on the regularity of the evolution of the interface but replace the interior regularity assumption ($\ref{eq-hypB}$) with a reduced assumption of regularity on $\tilde{v}$, the restriction of $v$ to the free interface.

\begin{definition}
For $T>0$ and $A>0$, we say that $(u,v,z,p,q)$ is an ($A$, $\textrm{E}_{\textrm{wild}}$, $\textrm{NS}_{\textrm{tame}}$)--weakly admissible 
solution to the system ($\ref{eqE1}$)--($\ref{eqE5}$) if it is a smooth solution to the system satisfying $$CA(0,z)\geq 1/A,\quad  \lVert \widetilde{u}(0,\cdot)\rVert_{L^\infty(\mathbb{R})}\leq A,$$ and $$\lVert v(0)\rVert_{C^3(\Omega_{NS}(0))}\leq A,$$ together
with the conditions (\ref{eq-assumption}) and (\ref{eq-hypC}), but not necessarily (\ref{eq-hypB}).
\end{definition}

We remark that in view of our choice of parametrization $z$, the condition (\ref{eq-assumption}) implies
\begin{align}
\lVert \widetilde{v}\rVert_{C^3([0,T]\times\mathbb{R})}\leq A.\label{eq-assumption-vtilde}
\end{align}

We first show that the condition (\ref{eq-assumption-vtilde}) on the $C^3([0,T]\times\mathbb{R})$ norm of $\widetilde{v}$ implies bounds on all spatial first-order derivatives of $v$ evaluated on the interface.

\begin{lemma}
\label{lem0-0}
Fix $T>0$ and $A>0$, and suppose that $(\omega_\Gamma,v,z,\widetilde{p},q)$ is an ($A$, $\textrm{E}_{\textrm{wild}}$, $\textrm{NS}_{\textrm{tame}}$)--admissible solution of the system ($\ref{eqE1}$)--($\ref{eqE5}$).  Then there exists $C>0$ such that $$\sup_{t\in [0,T]} \lVert \partial_1v(t)\rVert_{L^\infty_x(\Gamma(t))}+\lVert \partial_2v(t)\rVert_{L^\infty_x(\Gamma(t))}\leq C.$$ 
\end{lemma}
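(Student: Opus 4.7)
The plan is to reconstruct $\nabla v$ on the interface from two ingredients: the tangential derivative $\partial_\tau v$, which is directly read off from $\widetilde{v}$ via the chain rule, and the normal derivative $\partial_{\mathbf n}v$, which is not a priori known on the boundary and must instead be recovered by combining the incompressibility constraint $\divop v=0$ with the tangential part of the stress boundary condition (\ref{eqE5}).

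First I would introduce the orthonormal frame $\tau:=\partial_\alpha z/|\partial_\alpha z|$ and $\mathbf{n}:=-\tau^\perp$ on $\Gamma(t)$. Differentiating the identity $\widetilde{v}(t,\alpha)=v(t,z(t,\alpha))$ in $\alpha$ and using the chain rule gives
\begin{align*}
\partial_\alpha \widetilde{v}(t,\alpha) = |\partial_\alpha z(t,\alpha)|\,(\partial_\tau v)(t,z(t,\alpha)),
\end{align*}
so the bound (\ref{eq-assumption-vtilde}), combined with the lower bound (\ref{eq-hypC}) on $|\partial_\alpha z|$ and the $C^4$ control on $z$ supplied by (\ref{eq-assumption}), immediately yields a uniform-in-$(t,\alpha)$ bound on $|\partial_\tau v|$ at points of $\Gamma(t)$.

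Next I would extract $\partial_{\mathbf n}v$ by decomposing it in the frame as $\partial_{\mathbf n}v=a\tau+b\,\mathbf{n}$ and determining the two scalars $a,b$ from two independent linear relations. The normal component is given by incompressibility: since $\divop v=\tau\cdot\partial_\tau v+\mathbf{n}\cdot \partial_{\mathbf n}v=0$, we get $b=-\tau\cdot\partial_\tau v$, which is already bounded. The tangential component comes from the boundary condition (\ref{eqE5}), which forces $D(v)\mathbf{n}$ to be parallel to $\mathbf{n}$; contracting with $\tau$ and using the symmetry of $D(v)=\nabla v+(\nabla v)^\top$ gives the identity $\tau\cdot D(v)\mathbf{n}=\tau\cdot\partial_{\mathbf n}v+\mathbf{n}\cdot\partial_\tau v=0$, so $a=-\mathbf{n}\cdot\partial_\tau v$, again bounded. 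Assembling these, we obtain
\begin{align*}
\partial_{\mathbf n}v = -(\mathbf{n}\cdot\partial_\tau v)\,\tau - (\tau\cdot\partial_\tau v)\,\mathbf{n}\quad\text{on }\Gamma(t),
\end{align*}
which is controlled by the same constant that bounds $\partial_\tau v$.

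Since $\{\tau,\mathbf{n}\}$ is orthonormal, the cartesian partials satisfy $\partial_i v=(\tau_i)\partial_\tau v+(n_i)\partial_{\mathbf n}v$ for $i=1,2$ at each boundary point, so the uniform bounds on $\partial_\tau v$ and $\partial_{\mathbf n}v$ transfer to $\partial_1 v|_{\Gamma(t)}$ and $\partial_2 v|_{\Gamma(t)}$, with a constant $C$ depending only on $A$ and $\nu_{NS}$. The main conceptual step, and the one that really uses the full PDE structure rather than mere trace information, is the recovery of the tangential component of $\partial_{\mathbf n}v$ from the zero-shear-stress consequence of (\ref{eqE5}); the remainder of the argument is algebra in the moving orthonormal frame.
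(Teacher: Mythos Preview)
Your proof is correct and follows essentially the same strategy as the paper's: both arguments recover the full gradient $\nabla v$ on $\Gamma(t)$ from (a) the tangential derivative $\partial_\alpha\widetilde{v}$, (b) the incompressibility $\divop v=0$, and (c) the fact that the stress condition (\ref{eqE5}) forces $D(v)\mathbf{n}$ to be parallel to $\mathbf{n}$. The only difference is packaging: the paper carries out the computation in Cartesian components of $D(v)\mathbf{n}$ and $(\nabla v)^\top\mathbf{n}$, whereas you work invariantly in the moving orthonormal frame $(\tau,\mathbf{n})$ and read off $\partial_{\mathbf n}v=-(\mathbf{n}\cdot\partial_\tau v)\tau-(\tau\cdot\partial_\tau v)\mathbf{n}$ directly. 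Your presentation is somewhat cleaner and makes the role of each ingredient more transparent, but the underlying argument is the same.
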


\begin{proof}
Fix $t\in [0,T]$.  The admissibility condition implies 
\begin{align*}
\sum_{j=1}^2 |(\partial_1v_j)(\partial_\alpha z_1)+(\partial_2v_j)(\partial_\alpha z_2)|\lesssim A,
\end{align*}
i.e. $|(\nabla v)^\top(\partial_\alpha z)|\lesssim A$.  It therefore suffices to estimate $$(\nabla v)^\top {\mathbf n}=(\nabla v)^\top (-\partial_\alpha z)^\perp=\bigg(\begin{array}{c} (\partial_1v_1)(\partial_\alpha z_2)-(\partial_2v_1)(\partial_\alpha z_1)\\ (\partial_1v_2)(\partial_\alpha z_2)-(\partial_2v_2)(\partial_\alpha z_1)\end{array}\bigg).$$  For this, we use the fact that $v$ is divergence free to write
\begin{align*}
D(v){\mathbf n}
&=-\bigg(\begin{array}{c}-(\partial_1 v_1)(\partial_\alpha z_2)+(\partial_2v_2)(\partial_\alpha z_2)+(\partial_1v_2+\partial_2v_1)(\partial_\alpha z_1)\\
(\partial_1 v_2+\partial_2v_1)(-\partial_\alpha z_2)+(\partial_2v_2)(\partial_\alpha z_1)-(\partial_1v_1)(\partial_\alpha z_1)\end{array}\bigg)
\end{align*}
so that in view of $(\tilde{p}+\tilde{q}+\sigma K_z(\alpha)){\mathbf n}=O(A)$, one obtains
\begin{align*}
|(\partial_2 v_1)(\partial_\alpha z_1)-(\partial_1v_1)(\partial_\alpha z_2)|\lesssim A
\end{align*}
and
\begin{align*}
|(-\partial_1 v_2)(\partial_\alpha z_2)+(\partial_2v_2)(\partial_\alpha z_1)|\lesssim A 
\end{align*}
as desired.
\end{proof}

As a consequence, recalling that $\omega_v$ satisfies $$\partial_t\omega_v+v\cdot \nabla \omega_v=\nu_{NS}\Delta \omega_v$$ in $\{(t,x):t\in [0,T],x\in \Omega_{NS}(t)\}$, a maximum principle argument implies 
\begin{align*}
\lVert \omega_v\rVert_{L^\infty(\{(t,x):t\in [0,T],x\in\overline{\Omega_{NS}(t)}\})}\leq C,
\end{align*}
and
\begin{align*}
\sup_{t\in [0,T]} \lVert \omega_v\rVert_{L_x^p(\Omega_{NS}(t))}\leq C,\quad 1\leq p\leq \infty.
\end{align*}

The second derivative estimates, which arise from Lemma \ref{lem0-0} and differentiation of the compatibility 
condition ($\ref{eqE5}$), are given by the following lemma.

\begin{lemma}
\label{lem0-01}
Fix $T>0$ and $A>0$, and suppose that $(u,v,z,p,q)$ is an ($A$, $\textrm{E}_{\textrm{wild}}$, $\textrm{NS}_{\textrm{tame}}$)--weakly admissible solution of the system ($\ref{eqE1}$)--($\ref{eqE5}$).  Then there exists $C>0$ such that 
\begin{align*}
&\sup_{(t,\alpha)\in [0,T]\times\mathbb{R}} \sum_{j\in\{1,2\}}|\langle (D^2v_j)(t,z(t,\alpha))\,(\partial_\alpha z),\partial_\alpha z\rangle|\\
&\hspace{1.2in}+|\langle (D^2v_j)(t,z(t,\alpha))\, (\partial_\alpha z),(\partial_\alpha z)^\perp\rangle|\leq C.
\end{align*}
\end{lemma}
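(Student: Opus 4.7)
The plan is to split the estimate into its two contractions. The tangential-tangential one, $\langle D^2 v_j\,\partial_\alpha z,\partial_\alpha z\rangle$, is obtained directly from $\partial_\alpha^2 \widetilde{v}_j$ via the chain rule. The mixed contraction $\langle D^2 v_j\,\partial_\alpha z,(\partial_\alpha z)^\perp\rangle$ is not a pure tangential derivative; to control it, one sets up a $2\times 2$ linear system in $j=1,2$ using two first-derivative relations already available on $\Gamma(t)$ --- the incompressibility constraint $\divop v = 0$ and the tangential part of the stress compatibility condition (\ref{eqE5}) --- each differentiated once in $\alpha$. The key geometric input is $|\partial_\alpha z|\geq 1/A$ from (\ref{eq-hypC}), which guarantees uniform invertibility of this system.

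Differentiating $\widetilde{v}_j(t,\alpha) = v_j(t,z(t,\alpha))$ twice in $\alpha$ gives
\begin{align*}
\partial_\alpha^2\widetilde{v}_j = \langle D^2 v_j\,\partial_\alpha z,\partial_\alpha z\rangle + \langle \nabla v_j, \partial_\alpha^2 z\rangle,
\end{align*}
so the first contraction is bounded using (\ref{eq-assumption-vtilde}) together with a product of $\nabla v_j|_{\Gamma(t)}$ (bounded by Lemma \ref{lem0-0}, whose proof uses only tangential compatibility, incompressibility, and the $C^3$ regularity of $\widetilde v$, and so applies equally well to weakly admissible solutions) with $\partial_\alpha^2 z$ (bounded by (\ref{eq-assumption})). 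Setting $Y_j := \langle D^2 v_j\,\partial_\alpha z,(\partial_\alpha z)^\perp\rangle$ for the second contraction, differentiating $\divop v = 0$ along $\partial_\alpha z$ yields $\langle D^2 v_1\,\partial_\alpha z,e_1\rangle + \langle D^2 v_2\,\partial_\alpha z,e_2\rangle = 0$; expanding $e_1,e_2$ in the orthogonal basis $\{\partial_\alpha z,(\partial_\alpha z)^\perp\}$ and using the tangential-tangential bound to absorb the tangential-tangential contributions leads to
\begin{align*}
-(\partial_\alpha z)_2\,Y_1 + (\partial_\alpha z)_1\,Y_2 = O(1).
\end{align*}
Since ${\mathbf n}\cdot\partial_\alpha z = 0$, the tangential component of (\ref{eqE5}) reads $\langle D(v){\mathbf n}, \partial_\alpha z\rangle = 0$ with no pressure contribution; differentiating in $\alpha$, expanding $D(v) = \nabla v + (\nabla v)^\top$, using ${\mathbf n} = -(\partial_\alpha z)^\perp$, and absorbing the lower-order pieces via Lemma \ref{lem0-0} and the tangential-tangential bound, one obtains
\begin{align*}
(\partial_\alpha z)_1\,Y_1 + (\partial_\alpha z)_2\,Y_2 = O(1).
\end{align*}

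The coefficient matrix of this $2\times 2$ system has determinant $-|\partial_\alpha z|^2$, bounded away from $0$ uniformly by (\ref{eq-hypC}); inverting gives $|Y_1|+|Y_2|\lesssim 1$ as required. The main (mild) obstacle is careful bookkeeping in the $\partial_\alpha$-differentiation of the tangential compatibility condition, so that all terms involving only first derivatives of $v$ are absorbed by Lemma \ref{lem0-0} and the remaining second-derivative terms combine cleanly into a linear relation on $Y_1,Y_2$; no additional cancellation or higher regularity of $v$ in the interior is needed.
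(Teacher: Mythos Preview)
Your proof is correct and uses the same essential ingredients as the paper: the chain-rule identity for $\partial_\alpha^2\widetilde{v}_j$ handles the tangential--tangential contraction, and the mixed contraction is controlled by combining the $\alpha$-derivative of the incompressibility constraint with the $\alpha$-derivative of the stress compatibility condition (\ref{eqE5}), together with Lemma~\ref{lem0-0}. The organization, however, is different. You isolate two scalar relations --- one from $\partial_\alpha(\divop v)=0$ and one from $\partial_\alpha\langle D(v)\mathbf{n},\partial_\alpha z\rangle=0$ --- and invert the resulting $2\times 2$ system in $(Y_1,Y_2)$ using $|\partial_\alpha z|\geq 1/A$. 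The paper instead differentiates the full vector equation (\ref{eqE5}) to bound $x:=[\partial_\alpha D(v)](\partial_\alpha z)^\perp$, and then uses $\divop v=0$ inside an explicit component computation to show that each $x_j$ equals $\langle D^2v_j\,\partial_\alpha z,(\partial_\alpha z)^\perp\rangle$ up to tangential--tangential terms already controlled. A mild advantage of your route is that by dotting (\ref{eqE5}) with $\partial_\alpha z$ \emph{before} differentiating, the pressure and curvature contributions (which are scalar multiples of $\mathbf{n}$) drop out identically; the paper's passage through the full vector inequality for $x$ leaves implicit the control of $\partial_\alpha(\widetilde{p}+\widetilde{q})$ on the normal component, which your argument never needs.
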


\begin{proof}
Fix $t\in [0,T]$.  We begin by noting that the hypothesis ($\ref{eq-assumption}$) in the weak admissibility 
condition ensures the regularity estimate (\ref{eq-assumption-vtilde}), namely
\begin{align*}
\lVert \widetilde{v}\rVert_{C^3([0,T]\times\mathbb{R})}\leq A.
\end{align*}
We therefore have
\begin{align*}
&\sum_{j=1}^2 \bigg|(\partial_{11}v_j)(\partial_\alpha z_1)^2+2(\partial_{12}v_j)(\partial_\alpha z_1)(\partial_\alpha z_2)+(\partial_{22}v_j)(\partial_\alpha z_2)^2\\
&\hspace{0.4in}+(\partial_1v_j)(\partial_{\alpha\alpha}z_1)+(\partial_2v_j)(\partial_{\alpha\alpha}z_2)\bigg|\lesssim A,
\end{align*}
where all expressions are evaluated at the point $z(t,\alpha)$ on the interface.  

Moreover, Lemma $\ref{lem0-0}$ also remains valid for ($A$, $\textrm{E}_{\textrm{wild}}$, $\textrm{NS}_{\textrm{tame}}$)--weakly-admissible solutions.  As a consequence, $\lVert \nabla v\rVert_{L^\infty}\lesssim A$, and since $\lVert z(t)\rVert_{C^2(\mathbb{R})} \lesssim A$ by the weak admissibility hypothesis, we have
\begin{align*}
&\sum_{j=1}^2 \bigg|(\partial_{11}v_j)(\partial_\alpha z_1)^2+2(\partial_{12}v_j)(\partial_\alpha z_1)(\partial_\alpha z_2)+(\partial_{22}v_j)(\partial_\alpha z_2)^2\bigg|\lesssim A^2.
\end{align*}

The above bound can be rewritten as $|\langle (D^2v_j)(\partial_\alpha z),\partial_\alpha z\rangle|\lesssim A^2$ for $j=1,2$.  On the other hand, differentiating the compatibility condition (\ref{eqE5}) with respect to $\alpha$ leads to 
\begin{align}
[\partial_\alpha D(v)](\partial_\alpha z)^\perp+D(v)(\partial_{\alpha\alpha}z)^\perp\lesssim A^2.\label{eq-D2-1}
\end{align}
Moreover, we have already bounded the entries in the matrix $D(v)$ and the vector $\partial_{\alpha\alpha}z$ by a constant multiple of $A^2$.  Collecting these estimates, we obtain
\begin{align*}
|[\partial_\alpha D(v)](\partial_\alpha z)^\perp|\lesssim A^2,
\end{align*}
and thus, in view of ($\ref{eq-D2-1}$),
\begin{align}
|[\partial_\alpha D(v)](\partial_\alpha z)^\perp|\lesssim A^2.\label{eq-D2-2}
\end{align}

We now set $$x:=[\partial_\alpha D(v)](\partial_\alpha z)^\perp,$$ and write $x=(x_1,x_2)\in\mathbb{R}^2$.  A computation using the divergence-free condition for $v$ now shows that 
\begin{align*}
x_1&=2[(\partial_{11}v_1)(\partial_\alpha z_1)+(\partial_{12}v_1)(\partial_\alpha z_2)](-\partial_\alpha z_2)\\
&\hspace{0.4in}+\Big[(\partial_{11}v_2)(\partial_\alpha z_1)+(\partial_{12}v_2)(\partial_\alpha z_2)\\
&\hspace{0.6in}+(\partial_{21}v_1)(\partial_\alpha z_1)+(\partial_{22}v_1)(\partial_\alpha z_2)\Big](\partial_\alpha z_1)\\
&=(\partial_{12}v_2)(\partial_\alpha z_1)(\partial_\alpha z_2)+(\partial_{22}v_2)(\partial_\alpha z_2)^2\\
&\hspace{0.4in}+\Big[(\partial_{11}v_1)(\partial_\alpha z_1)+(\partial_{12}v_1)(\partial_\alpha z_2)\Big](-\partial_\alpha z_2)\\
&\hspace{0.6in}+\Big[(\partial_{11}v_2)(\partial_\alpha z_1)+(\partial_{12}v_2)(\partial_\alpha z_2)\\
&\hspace{0.8in}+(\partial_{21}v_1)(\partial_\alpha z_1)+(\partial_{22}v_1)(\partial_\alpha z_2)\Big](\partial_\alpha z_1),
\end{align*}
which gives
\begin{align}
\nonumber x_1&=[(\partial_{11}v_1)(\partial_\alpha z_1)+(\partial_{12}v_1)(\partial_\alpha z_2)](-\partial_\alpha z_2)\\
\nonumber &\hspace{0.4in}+[(\partial_{21}v_1)(\partial_\alpha z_1)+(\partial_{22}v_1)(\partial_\alpha z_2)](\partial_\alpha z_1)+O(A)\\
&=\langle (D^2v_1)(\partial_\alpha z),(\partial_\alpha z)^\perp\rangle+O(A)\label{eq-x1}
\end{align}

Similarly, 
\begin{align*}
x_2&=[(\partial_{11}v_2)(\partial_\alpha z_1)+(\partial_{12}v_2)(\partial_\alpha z_2)\\
&\hspace{0.2in}+(\partial_{21}v_1)(\partial_\alpha z_1)+(\partial_{22}v_1)(\partial_\alpha z_2)](-\partial_\alpha z_2)\\
&\hspace{0.2in}+2[(\partial_{21}v_2)(\partial_\alpha z_1)+(\partial_{22}v_2)(\partial_\alpha z_2)](\partial_\alpha z_1)\\
&=[(\partial_{11}v_2)(\partial_\alpha z_1)+(\partial_{12}v_2)(\partial_\alpha z_2)](-\partial_\alpha z_2)\\
&\hspace{0.2in}-(\partial_{21}v_1)(\partial_\alpha z_1)(\partial_\alpha z_2)-(\partial_{22}v_1)(\partial_\alpha z_2)^2\\
&\hspace{0.2in}+[(\partial_{21}v_2)(\partial_\alpha z_1)+(\partial_{22}v_2)(\partial_\alpha z_2)](\partial_\alpha z_1)\\
&\hspace{0.2in}-(\partial_{11}v_1)(\partial_\alpha z_1)^2-(\partial_{21}v_1)(\partial_\alpha z_2)(\partial_\alpha z_1)
\end{align*}
and thus
\begin{align}
\nonumber x_2&=[(\partial_{11}v_2)(\partial_\alpha z_1)+(\partial_{12}v_2)(\partial_\alpha z_2)](-\partial_\alpha z_2)\\
\nonumber &\hspace{0.2in}+[(\partial_{21}v_2)(\partial_\alpha z_1)+(\partial_{22}v_2)(\partial_\alpha z_2)](\partial_\alpha z_1)+O(A)\\
&=\langle (D^2v_2)(\partial_\alpha z),(\partial_\alpha z)^\perp\rangle+O(A).\label{eq-x2}
\end{align}

Combining ($\ref{eq-D2-2}$) with ($\ref{eq-x1}$) and ($\ref{eq-x2}$) completes the proof.
\end{proof}

\end{document}